\documentclass[a4j,10pt,showkeys]{article}

\usepackage{amsmath}    
\usepackage{graphicx}   
\usepackage{geometry}
\usepackage{verbatim}   
\usepackage{color}      
\usepackage{hyperref}   

\usepackage{amssymb}
\usepackage{amsthm}
\usepackage{mathrsfs}

\newtheorem{Def}{Definition}[section]
\newtheorem{Eg}[Def]{Example}
\newtheorem{Prop}[Def]{Proposition}
\newtheorem{Lem}[Def]{Lemma}
\newtheorem{Thm}[Def]{Theorem}

\newtheorem{Asm}[Def]{Assumption}
\theoremstyle{definition}
\newtheorem{Rem}[Def]{Remark}

\newcommand{\rd}{\,\mathrm{d}}
\newcommand{\rmd}{\mathrm{d}}
\newcommand{\1}{{\bf 1}}

\newcommand{\bE}{\mathbb{E}}\newcommand{\bF}{\mathbb{F}}\newcommand{\bN}{\mathbb{N}}\newcommand{\bP}{\mathbb{P}}\newcommand{\bR}{\mathbb{R}}

\newcommand{\cA}{\mathcal{A}}\newcommand{\cF}{\mathcal{F}}

\newcommand{\sL}{\mathscr{L}}

\numberwithin{equation}{section}
\newcommand{\norm}[1]{\left\lVert #1 \right\rVert}

\allowdisplaybreaks

\begin{document}

\title{
Strong rate of convergence for the Euler--Maruyama scheme of SDEs with
unbounded H\"older continuous drift coefficient
}
\author{
Tsukasa Moritoki\footnote{
Department of Mathematics,
Okayama University,
Tsushima-naka,
Kita-ku
Okayama
700-8530,
Japan,
email~:~\texttt{pxyq0l31@s.okayama-u.ac.jp}
}
\quad and \quad
Dai Taguchi\footnote{
Department of Mathematics,
Kansai University,
Suita,
Osaka,
564-8680,
Japan,
email~:~\texttt{taguchi@kansai-u.ac.jp}
}
}
\date{\today}
\maketitle
\begin{abstract}
In this paper, we provide the strong rate of convergence for the Euler--Maruyama scheme for multi-dimensional stochastic differential equations with uniformly locally (unbounded) H\"older continuous drift and multiplicative noise. Our technique is based on It\^o--Tanaka trick (Zvonkin transformation) for unbounded drift. Moreover, in order to apply the stochastic sewing lemma, we use the heat kernel estimate for the density function of the Euler--Maruyama scheme.
\\
\textbf{2020 Mathematics Subject Classification}:
60H10; 60H35; 60H50\\
%
%
%
%
%
\textbf{Keywords}:
stochastic differential equations;
Euler--Maruyama scheme;
unbounded H\"older continuous drift;
It\^o--Tanaka trick;
stochastic sewing lemma;
heat kernel estimate.
\end{abstract}


\section{Introduction}\label{Sec_1}
Let $X=(X_{t})_{t \in [0,T]}$ be a solution of $d$-dimensional stochastic differential equation (SDE) of the form
\begin{equation}\label{sde_01}
\rd X_t = b(X_t)\rd t + \sigma(X_t)\rd B_t,\ X_0 = x_0 \in \bR^{d},\ t \in [0, T],
\end{equation}
where $B$ is a $d$-dimensional standard Brownian motion on a filtered probability space $(\Omega, \cF, \bF, \bP)$ with a filtration $\bF=(\cF_t)_{t \in [0,T]}$ satisfying the usual conditions.
In this paper, the drift coefficient $b=(b_{1},\ldots,b_{d})^{\top}:\bR^d \to \bR^d$ is assumed to be uniformly locally (unbounded) H\"older continuous with exponent $\alpha \in (0,1]$, that is,
\begin{align*}
\sup_{x \neq y \in \bR^{d},\,|x - y| \leq 1}
\frac{|b(x)-b(y)|}{|x-y|^{\alpha}}
<\infty
\end{align*}
and the diffusion coefficient $\sigma:\bR^d \to \bR^{d \times d}$ is assumed to be bounded, smooth and uniformly elliptic.
Under this conditions, Flandoli, Gubinelli and Priola \cite[Theorem 7]{FlGuPr10} prove the existence of a global flow of diffeomorphisms and pathwise uniqueness for SDE \eqref{sde_01}.
Their technique for the proof is based on a transformation of the drift coefficient which is called It\^o--Tanaka trick (or Zvonkin transformation \cite{Ve81,Zv74}).
More precisely, they prove that for sufficiently large $\lambda>0$, the equation
\begin{align*}
\lambda u_{\lambda}
-
\sL u_{\lambda}
=
b
\end{align*}
admits a unique classical solution $u_{\lambda}=(u_{\lambda,1},\ldots,u_{\lambda,d})^{\top}$.
Here the operator $\sL$ is the infinitesimal generator of $X$, that is,
\begin{align*}
(\sL g)(x)
:=
\frac{1}{2}\mathrm{Tr}\big(a(x)(\nabla^{2}g)(x)\big)
+
\langle b(x),(\nabla g)(x) \rangle,~a(x):=\sigma(x) \sigma(x)^{\top}.
\end{align*}
Then by using It\^o's formula, we have for $i=1,\ldots,d$,
\begin{align*}
\int_{0}^{t}
b_{i}(X_{s})
\rd s
=
u_{\lambda,i}(x_{0})
-
u_{\lambda,i}(X_{t})
+
\lambda
\int_{0}^{t}
u_{\lambda,i}(X_{s})
\rd s
+
\int_{0}^{t}
\langle (\nabla u_{\lambda,i})(X_{s}),\sigma(X_{s}) \rd B_{s} \rangle,
\end{align*}
and this transformation shows that we can use the regularity of the solution $u_{\lambda,i}$ to prove (for example) pathwise uniqueness and to provide the rate of convergence for the numerical scheme.

In this paper, we concern with the strong rate of convergence for the Euler-Maruyama scheme $X^n=(X_t^n)_{t \in [0,T]}$ defined by
\begin{equation}\label{em_01}
\rd X_t^n = b(X_{\kappa_n(t)}^n)\rd t + \sigma(X_{\kappa_n(t)}^n)\rd B_t,\ X_0^n = x_0, \ t \in [0, T],
\end{equation}
under the above regularity conditions for the coefficients, in particular uniformly locally (unbounded) H\"older continuous drift coefficient.
Here $\kappa_n$ is defined by $\kappa_n(t) := t_k := kT/n, t \in [t_k, t_{k + 1})$.
It is well-known that if the coefficients $b$ and $\sigma$ are Lipschitz continuous, then the Euler--Maruyama scheme has the strong rate of convergence $1/2$ (e.g. \cite{KlPl92}), that is, for any $p \geq 1$,
\begin{align*}
\bE\Big[\sup_{t \in [0, T]}|X_{t}-X_{t}^{n}|^{p}\Big]^{1/p}
\lesssim
n^{-1/2}.
\end{align*}
In the case of non-Lipschitz coefficients, Kaneko and Nakao \cite[Theorem D]{KaNa88} prove that if pathwise uniqueness holds for SDE \eqref{sde_01} with continuous and linear growth coefficients, then the Euler--Maruyama scheme converges in $L^{2}$, but they do not study the rate of convergence.
For the results concerning wtih the strong rate of convergence, if the drift coefficient is bounded H\"older continuous with exponent $\alpha \in (0,1)$ and the diffusion coefficient is the identity matrix, Menoukeu Pamen and Taguchi \cite{MeTa17} prove that the Euler--Maruyama scheme has the strong rate of convergence $\alpha/2$, that is, for any $p \geq 1$,
\begin{align*}
\bE[|X_{T}-X_{T}^{n}|^{p}]^{1/p}
\lesssim
n^{-\alpha/2}.
\end{align*}
The technique for the proof is based on It\^o--Tanaka trick to remove the H\"older drift.
In recent years, this result is extended in many papers (e.g. \cite{BaDiPa23,BaHuYu19,BaHuZh22,BaWu25,BuDaGe21,DaGe20,DaGeLe23,GeLaLi25,LeLi25,NeSz21,NgTa17}).
In particular, Butkovsky, Dareiotis and Gerencs\'er \cite{BuDaGe21} improve the strong rate of convergence, and prove that for any $p \geq 1$,
\begin{align*}
\bE\Big[\sup_{t \in [0, T]}|X_{t}-X_{t}^{n}|^{p}\Big]^{1/p}
\lesssim
\left\{\begin{array}{ll}
\displaystyle
n^{-1/2+\varepsilon},
&\text{ if } \sigma \in C^{2}_{b} \text{ and uniformly elliptic},\\
\displaystyle
n^{-(1+\alpha)/2+\varepsilon},
&\text{ if } \sigma \text{ is the identity matrix},
\end{array}\right.
\end{align*}
The key idea of the proof is to use the stochastic sewing lemma introduced by L\'e \cite{Le20}.
The stochastic sewing lemma is a stochastic generalization of the deterministic sewing lemma \cite{Fede06,Gu04}, which gives a sufficient condition for the convergence of random Riemann sums.
Moreover, for uniformly locally (unbounded) H\"older continuous drift and $\sigma=I_{d}$, Babi, Dieye and Menoukeu Pamen \cite{BaDiPa23} provide the strong rate of convergence for sufficiently small $T>0$,
They use It\^o--Tanaka trick \cite{FlGuPr10}.

In this paper, we provide the strong rate of convergence for the Euler--Maruyama scheme with uniformly locally H\"older continuous drift and multiplicative noise.
More precisely, we prove that if we additionally suppose that the drift is of sub-linear growth, then for any $T>0$ and $p \geq 1$, it holds that
\begin{align*}
\bE[|X_{T}-X_{T}^{n}|^{p}]^{1/p}
\lesssim
\left\{\begin{array}{ll}
\displaystyle
n^{-1/2},
&\text{ if } \sigma \in C^{3}_{b} \text{ and uniformly elliptic},\\
\displaystyle
n^{-(1+\alpha)/2+\varepsilon},
&\text{ if } \sigma \text{ is the identity matrix}.
\end{array}\right.
\end{align*}
The idea of the proof for our main results is to use the stochastic sewing lemma and It\^o--Tanaka trick.
In order to apply the stochastic sewing lemma, we use the heat kernel estimate for the density function of the Euler--Maruyama scheme (without drift) and its derivatives proved by Konakov and Mammen \cite{KoMa02}.
More precisely, they prove that by using the parametrix method, if the coefficients are sufficiently smooth, then the density function $p^{n}_{t_{k}}(x,\cdot)$ of $X^{n}_{t_{k}}$ with $X^{n}_{0}=x$ satisfies the following heat kernel estimate: for some multiindex $\ell,m$, there exists $C_{+},c_{+}>0$ such that for any $y \in \bR^{d}$,
\begin{align*}
|\partial_{y}^{\ell}\partial_{x}^{m}p^{n}_{t_{k}}(x,y)|
\leq
C_{+}
t_{k}^{-(|\ell|+|m|)}
\frac{e^{-\frac{|y-x|^{2}}{2c_{+}t_{k}}}}{(2\pi c_{+}t_{k})^{d/2}}.
\end{align*}
Then we use Girsanov transformation to add the drift term for the Euler--Maruyama scheme.
In order to confirm Novikov condition (the drift can be unbounded, but it is of sub-linear growth), we use Burkholder--Davis--Gundy's inequality with a sharp constant (e.g. \cite{BaYo82,Re08}).
Finally, we use It\^o--Tanaka trick for unbounded drift coefficient \cite{FlGuPr10} in order to transform the drift coefficient.


\subsection{Notations}
Throughout this paper, we will use the following notations.
For $x,y \in \bR$,  $x \lesssim y$ means that there is a constant $c > 0$ such that $x \le cy$.
For $x \in \bR^{d}$, $|x|$ denotes Euclidean norm.
For a matrix $A$, $|A|$ denotes the Frobenius norm.
For a matrix valued function $a$, we define $\|a\|_{\infty} := \sup_{x} |a(x)|$.
$I_d$ denotes $d$-dimensional identity matrix.
For a random variable $X$ on a filtered probability space $(\Omega, \cF, \bF, \bP)$ with a filtration $\bF=(\cF_t)_{t \in [0,T]}$, we set $\|X \|_{L^{p}}:=\bE[|X|^{p}]^{1/p}$ and $\bE^t[X]:=\bE[X\,|\,\cF_{t}]$, $t \in [0,T]$.
For a $d$-dimensional symmetric positive definite matrix $A$,  we define
\begin{equation*}
g_{A}(x)
:=
\frac{1}{(2\pi)^{d/2}(\det A)^{1/2}}
\exp\Big(-\frac{\langle A^{-1} x, x \rangle}{2}\Big),~x \in \bR^d,
\end{equation*}
and $g_c(x) := g_{cI_d}(x)$ for $c > 0$.

\section{Main results}\label{Sec_2}

We first introduce some space of functions.
Let $d,k,n \in \bN$ and $\alpha \in (0,1]$.

A measurable function $f:\bR^{d} \to \bR$ is called \textit{linear growth} if $\|(1 + |\cdot|)^{-1}f(\cdot) \|_{\infty}<\infty$.
The function $f$ is called \textit{sub-linear growth} if $f$ is bounded on any compact sets and satisfies $\lim_{|x| \to \infty} |f(x)| / |x| = 0$.
In the latter case, we have the following growth condition: for any $\delta > 0$, there exists $L_{\delta} > 0$ such that for any $x \in \bR^{d}$, $|f(x)| \leq \delta |x| + L_{\delta}$.

$C^{\alpha}(\bR^{d};\bR^{k})$ denotes the set of all functions from $\bR^{d}$ to $\bR^{k}$ which are uniformly locally $\alpha$-H\"older continuous functions, that is,
\begin{align*}
[f]_{\alpha}
:=
\sup_{x \neq y \in \bR^{d},\,|x - y| \leq 1}
\frac{|f(x) - f(y)|}{|x - y|^\alpha}
<
\infty.
\end{align*}
Note that if $f \in C^{\alpha}(\bR^{d};\bR^{k})$, then it is of linear growth.
Indeed, let $x\in \bR^{d}$ and $m=\lfloor |x| \rfloor+1$, where $\lfloor a \rfloor$ denotes the floor function.
Then it holds that
\begin{align}
|f(x)|
&\leq
|f(0)|
+
\sum_{j = 0}^{m - 1}
\Big|f\Big(\frac{j}{m}x\Big) - f\Big(\frac{j+1}{m}x\Big)\Big|
\notag
\leq
|f(0)|
+
[f]_\alpha
\sum_{j = 0}^{m - 1} \Big(\frac{|x|}{m}\Big)^\alpha
\\&=
|f(0)|
+
[f]_{\alpha}m^{1-\alpha}|x|^{\alpha}
\leq
|f(0)|
+
[f]_\alpha (1 + |x|) .
\label{eq_linear_1}
\end{align}
Thus $f$ is of linear growth.
The set $C^{\alpha}(\bR^{d};\bR^{k})$ is a Banach space with the norm
\begin{align*}
\|f\|_{\alpha}:=\|(1 + |\cdot|)^{-1}f(\cdot) \|_{\infty}+[f]_{\alpha}.
\end{align*}
Moreover, for $\alpha,\beta \in (0,1]$ with $\alpha<\beta$, it holds that $C^{\alpha}(\bR^{d};\bR^{k}) \subset C^{\beta}(\bR^{d};\bR^{k})$.
In particular, if $f\in C^{1}(\bR^{d};\bR^{k})$, then $f$ is globally Lipschitz continuous, that is,
\begin{align*}
[f]_{\mathrm{Lip}}:=\sup_{x \neq y}\frac{|f(x)-f(y)|}{|x-y|}<\infty.
\end{align*}
Indeed, for $x,y \in \bR^{d}$ with $|x-y|>1$, we set $m=\lfloor |x-y| \rfloor+1$.
Then we have
\begin{align*}
|f(x)-f(y)|
&\leq
\sum_{j=0}^{m-1}
\Big|
f\Big(x+\frac{j}{m}(y-x)\Big)
-
f\Big(x+\frac{j+1}{m}(y-x)\Big)
\Big|
\leq
[f]_{1}
\sum_{j=0}^{m-1}
\frac{|x-y|}{m}
=
[f]_{1}|x-y|.
\end{align*}

$C_{b}^{n}(\bR^{d};\bR^{k})$ is the space of all functions from $\bR^{d}$ to $\bR^{k}$ having bounded derivatives up to the order $n$, that is,
\begin{align*}
\|f\|_{C_{b}^{n}}:= \sum_{i = 0}^{n} \|\nabla^{i}f\|_{\infty} < \infty.
\end{align*}
Moreover, $C^{n + \alpha}(\bR^{d}:\bR^{k})$ denotes the set of all functions from $\bR^{d}$ to $\bR^{k}$ satysfying
\begin{align*}
\|f\|_{n + \alpha}:=\| (1 + |\cdot|)^{-1}f(\cdot)\|_\infty +[\nabla^n f]_{\alpha}+\|\nabla f\|_{C_{b}^{n-1}} < \infty.
\end{align*}

We have several examples for the class $C^{\alpha}(\bR^{d};\bR)$.

\begin{Eg}\label{Eg_1}
\begin{itemize}
\item[(i)]
Let $\alpha,\beta \in (0,1)$.
Define $f(x):=|x|^{\alpha}+|x|^{\beta}$, $x \in \bR^{d}$.
Then $f \in C^{\gamma}(\bR^{d};\bR)$, where $\gamma:=\alpha\wedge \beta$ and $f$ is of sub-linear growth.
However, if $\alpha< \beta$ (or $\alpha>\beta$), then $f$ is not globally H\"older continuous for any exponent $\rho \in (0,1]$.
Indeed, for any $x \in \bR^{d}$, it holds that
\begin{align*}
\frac{|f(x)-f(0)|}{|x-0|^{\rho}}
=
|x|^{\alpha-\rho}+|x|^{\beta-\rho}
\to \infty,
\end{align*}
as $|x| \to \infty$ if $\alpha>\rho$ or $\alpha \leq \rho<\beta$, and as $|x| \to 0$ if $\alpha<\beta \leq \rho$.

\item[(ii)]
Let $\alpha \in (0,1)$ and $f(x):=|x|^{\alpha} \log(2+|x|)$, $x \in \bR^{d}$.
Then $f \in C^{\alpha}(\bR^{d};\bR)$ and $f$ is of sub-linear growth.
Indeed, the map $\bR^{d} \ni x \mapsto |x|\in \bR$ is globally Lipschitz continuous, thus we can assume $d=1$.
If $|x|, |y|\geq 1$, then $f$ is smooth and $f'$ is bounded, hence for  $|x-y|\leq 1$,
\begin{align*}
|f(x)-f(y)|
\leq
\|f'\|_{\infty}|x-y|
\leq
\|f'\|_{\infty}|x-y|^{\alpha}.
\end{align*}
If $|x|<1$ and $|x-y|\leq 1$, then we have
\begin{align*}
|f(x)-f(y)|
&\leq
\log(2+|x|)
\big||x|^{\alpha}-|y|^{\alpha}\big|
+
|y|^{\alpha}
\big|\log(2+|x|)-\log(2+|y|)\big|
\\&\leq
(\log 3) |x-y|^{\alpha}
+
\frac{2^{\alpha}}{2+|x|\wedge |y|}|x-y|
\leq
(\log 3+2^{\alpha-1})|x-y|^{\alpha}.
\end{align*}
Hence $f \in C^{\alpha}(\bR;\bR)$.
However, $f$ is not globally H\"older continuous for any exponent $\rho \in (0,1]$.
Indeed, for any $x \in \bR$ and  $\rho\in (0,1]$, we have
\begin{align*}
\frac{|f(x)-f(0)|}{|x-0|^{\rho}}
=
|x|^{\alpha-\rho}\log(2+|x|)
\to \infty,
\end{align*}
as $|x| \to \infty$ if $\alpha \geq \rho$, and as $|x| \to 0$ if $\alpha<\rho$.
\end{itemize}

\end{Eg}

We need the following assumptions for the coefficients of SDE \eqref{sde_01}.

\begin{Asm}\label{asm_01}
We assume that the drift coefficient $b:\bR^{d} \to \bR^{d}$ and the diffusion coefficient $\sigma$ satisfy the following conditions.
\begin{itemize}
\item[(i)]
$b \in C^{\alpha}(\bR^d; \bR^d)$ with $\alpha \in (0,1]$.
\item[(ii)]
$b$ is of sub-linear growth.
\item[(iii)]
$\sigma \in C_{b}^{3}(\bR^d; \bR^{d \times d})$.
\item[(iv)]
There exists $\lambda \geq 1$ such that for any $x,\xi \in \bR^{d}$
\begin{equation*}
\lambda^{-1}|\xi|^{2}
\leq
\langle a(x)\xi,\xi \rangle
\leq
\lambda|\xi|^{2},~a(x)=\sigma(x) \sigma(x)^{\top}.
\end{equation*}
\end{itemize}
\end{Asm}

Under Assumption \ref{asm_01} (i), (iii) and (iv), there exists a unique strong solution to SDE \eqref{sde_01}  (see, \cite[Theorem 7]{FlGuPr10}).

We provide the following strong rate of convergence for the Euler--Maruyama scheme \eqref{em_01}.

\begin{Thm}\label{main_thm00}
We suppose that Assumption \ref{asm_01} holds.
Then for any $p \geq 1$ and $\varepsilon \in (0,(1+\alpha)/2)$, there exist constants $C_{p}>0$ and $C_{p,\varepsilon}>0$ such that
\begin{align*}
\sup_{t \in [0, T]}
\bE[|X_{t}-X_{t}^{n}|^{p}]^{1/p}
\leq
\left\{\begin{array}{ll}
\displaystyle
C_{p}n^{-1/2},
&\text{ if } \sigma \text{ is not a constant matrix,}
\\
\displaystyle
C_{p,\varepsilon}n^{-(1+\alpha)/2+\varepsilon},
&\text{ if } \sigma \text{ is a constant matrix.}
\end{array}\right.
\end{align*}
\end{Thm}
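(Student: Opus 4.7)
By \cite[Theorem 7]{FlGuPr10}, for each sufficiently large $\lambda>0$ the resolvent equation $\lambda u_\lambda - \sL u_\lambda = b$ admits a unique classical solution $u_\lambda\in C^{2+\alpha}$ whose gradient $\nabla u_\lambda$ can be made arbitrarily small in sup-norm by increasing $\lambda$, whose Hessian $\nabla^2 u_\lambda$ is bounded and $\alpha$-H\"older, and whose growth at infinity is inherited from $b$ (sub-linear). Fix $\lambda$ so that $\|\nabla u_\lambda\|_\infty\le 1/2$; then $\phi(x):=x+u_\lambda(x)$ is a $C^2$-diffeomorphism of $\bR^d$ with $\tfrac12|x-y|\le|\phi(x)-\phi(y)|\le\tfrac32|x-y|$, so $|X_t-X_t^n|\le 2|\phi(X_t)-\phi(X_t^n)|$ and it suffices to bound the latter in $L^p$.

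\textbf{Phase 2 (It\^o expansion and the easy terms).}
Apply It\^o's formula to $\phi(X_t)$ and to $\phi(X_t^n)$. Using the PDE in the equivalent form $\nabla\phi\cdot b=\lambda u_\lambda-\tfrac12\mathrm{Tr}(a\nabla^2 u_\lambda)$, the drift of $d\phi(X_t)$ collapses to $\lambda u_\lambda(X_t)\,\rd t$, while for the scheme a discretisation remainder survives:
\begin{align*}
\phi(X_t)-\phi(X_t^n)
=\lambda\!\int_0^t\![u_\lambda(X_s)-u_\lambda(X_s^n)]\,\rd s\;+\;M_t^n\;-\;R_t^{(1)}\;-\;R_t^{(2)},
\end{align*}
where $M_t^n:=\int_0^t[\nabla\phi(X_s)\sigma(X_s)-\nabla\phi(X_s^n)\sigma(X_{\kappa_n(s)}^n)]\,\rd B_s$ and
\begin{align*}
R_t^{(1)}=\tfrac12\!\int_0^t\!\mathrm{Tr}\bigl(\nabla^2 u_\lambda(X_s^n)[a(X_{\kappa_n(s)}^n)-a(X_s^n)]\bigr)\rd s,\qquad R_t^{(2)}=\!\int_0^t\!\nabla\phi(X_s^n)[b(X_{\kappa_n(s)}^n)-b(X_s^n)]\,\rd s.
\end{align*}
The $u_\lambda$-integral (Lipschitz in $X-X^n$) feeds Gr\"onwall, and $M_t^n$ is controlled by BDG via the Lipschitz regularity of $\nabla\phi\cdot\sigma$. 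Since $a$ is Lipschitz (from $\sigma\in C_b^3$) and $\nabla^2 u_\lambda$ is bounded, one has $\|R_t^{(1)}\|_{L^p}\lesssim\int_0^t\|X_s^n-X_{\kappa_n(s)}^n\|_{L^p}\,\rd s\lesssim n^{-1/2}$, where the one-step estimate uses the uniform moment bound $\sup_{n,s}\bE|X_s^n|^p<\infty$, itself a consequence of the sub-linear growth of $b$. The whole theorem thus reduces to an $L^p$ bound on $R_t^{(2)}$.

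\textbf{Phase 3 (Stochastic sewing on $R^{(2)}$).}
A crude H\"older bound on $R_t^{(2)}$ would yield only the rate $n^{-\alpha/2}$, so I invoke L\^e's stochastic sewing lemma on the germ
\begin{align*}
A_{s,t}:=\bE^s\!\int_s^t\!\nabla\phi(X_u^n)\bigl[b(X_u^n)-b(X_{\kappa_n(u)}^n)\bigr]\,\rd u.
\end{align*}
Verifying its hypotheses requires conditional-expectation estimates $\|\bE^s[f(X_u^n)-f(X_{\kappa_n(u)}^n)]\|_{L^p}$ for $f\in C^\alpha$ with a favourable negative power of $u-s$. These I obtain by Girsanov-removing the drift of $X^n$, reducing the computation to the driftless Euler scheme whose transition density satisfies the Konakov--Mammen Gaussian heat-kernel estimate recalled in the introduction. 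Novikov's condition for the Girsanov density is verified uniformly in $n$ using sub-linear growth of $b$ and Burkholder--Davis--Gundy with a sharp constant. When $\sigma$ is non-constant, the Gaussian kernel can absorb only one H\"older factor, and summation of the sewing inequalities over the partition yields $\|R_t^{(2)}\|_{L^p}\lesssim n^{-1/2}$. When $\sigma$ is a constant matrix, the driftless scheme is genuine Brownian motion, the density is exactly Gaussian, and the zero-mean cancellation between $b(X_u^n)$ and $b(X_{\kappa_n(u)}^n)$ combined with heat-semigroup time-regularity buys an extra H\"older power, producing $n^{-(1+\alpha)/2+\varepsilon}$; the $\varepsilon$ is the standard endpoint-integrability loss of the sewing argument.

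The hard part, as anticipated, is Phase 3: making the Girsanov change of measure work uniformly in $n$ despite $b$ being unbounded, and then arranging the Konakov--Mammen Gaussian tails to absorb the unbounded factor $b(X_u^n)-b(X_{\kappa_n(u)}^n)$ (the $\nabla\phi$ factor is fortunately bounded by construction of $\phi$) inside the sewing germ. Once the bounds on $R^{(2)}$ are secured, Gr\"onwall and BDG closure applied to $\|\phi(X_t)-\phi(X_t^n)\|_{L^p}$ combined with the bi-Lipschitz inversion of $\phi$ yield the two claimed rates.
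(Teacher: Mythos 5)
Your overall strategy is the same as the paper's: the Zvonkin/It\^o--Tanaka reduction via the resolvent equation $\lambda u_\lambda-\sL u_\lambda=b$, followed by stochastic sewing for the quadrature error, with Girsanov and the Konakov--Mammen heat kernel estimate doing the heavy lifting. Your bookkeeping through the diffeomorphism $\phi=\mathrm{id}+u_\lambda$ is a cosmetic variant of the paper's direct decomposition of $\bE\big[\big|\int_0^t(b(X_r)-b(X_r^n))\rd r\big|^p\big]$ into the terms $I_{1,1}^n,\dots,I_{1,5}^n$; both are equivalent.

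There is, however, a genuine error in how you explain the split of rates between the two cases. You claim that ``when $\sigma$ is non-constant, the Gaussian kernel can absorb only one H\"older factor, and summation of the sewing inequalities over the partition yields $\|R_t^{(2)}\|_{L^p}\lesssim n^{-1/2}$,'' with the extra H\"older factor arriving only when $\sigma$ is constant. This is not what happens. Proposition~\ref{lem_quadratic_em} (the sewing-based quadrature estimate) delivers the rate $n^{-(1+\alpha)/2+\varepsilon}$ in \emph{both} cases: the Konakov--Mammen bound $|(\nabla_z p^n)(t_{k+1},w,\kappa_n(r),z)|\lesssim(\kappa_n(r)-t_{k+1})^{-1/2}g_{c(\kappa_n(r)-t_{k+1})}(z-w)$ already holds for a general smooth uniformly elliptic $\sigma$, and the germ bound \eqref{sew_1} with $\Gamma_1\sim n^{-(1+\alpha)/2+\varepsilon}$ does not deteriorate for non-constant $\sigma$. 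Consequently, $R^{(2)}$ (equivalently $I_{1,4}^n$ in the paper's notation) satisfies the same $n^{-(1+\alpha)/2+\varepsilon}$ bound regardless of $\sigma$. The degradation to $n^{-1/2}$ for non-constant $\sigma$ comes from the \emph{other} terms you labelled ``easy'': $R^{(1)}$, the $\kappa_n$-shift hidden inside $M^n$ (i.e.\ $\nabla\phi(X^n_s)[\sigma(X^n_s)-\sigma(X^n_{\kappa_n(s)})]$), and the corresponding $\sigma$-discretisation stochastic integrals. Each of these is of generic order $\|X^n_s-X^n_{\kappa_n(s)}\|_{L^p}\lesssim n^{-1/2}$ (Lemma~\ref{lem_dif_em}), and they all vanish identically when $\sigma$ is a constant matrix. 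So the statement ``the whole theorem thus reduces to an $L^p$ bound on $R^{(2)}$'' is false for non-constant $\sigma$: the $n^{-1/2}$ bottleneck sits in your supposedly easy terms, not in the sewing estimate. If you carry out Phase~3 as you outline it, you will in fact prove the sharper $n^{-(1+\alpha)/2+\varepsilon}$ bound for $R^{(2)}$ in both cases, and you must then notice that $R^{(1)}$ and the $M^n$-shift contribute $n^{-1/2}$ and are therefore rate-determining when $\sigma$ is not constant.

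One further small point: your ``easy terms'' discussion implicitly uses that $R^{(1)}=0$ when $\sigma$ is constant (since then $a$ is constant), but you never say so; without that observation the constant-$\sigma$ rate $n^{-(1+\alpha)/2+\varepsilon}$ cannot be reached. Similarly for $M^n$, when $\sigma$ is constant the $\kappa_n$-shift disappears, which you also do not state. These gaps are easily filled, but they are exactly the reason the two cases in the theorem have different rates, so they should be explicit.
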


The main forcus of Theorem \ref{main_thm00} is on the case $\alpha \in (0,1)$, but the case $\alpha=1$ (that is, $b$ is globally Lipschitz and is of sub-linear growth) and a constant matrix $\sigma$ is also new result.
If we remove Assumption \ref{asm_01} (ii), then we can prove the same convergence rate for sufficiently small $T>0$ (see Remark \ref{Rem_linear}).


\begin{Rem}
Butkovsky, Dareiotis and Gerencs\'er \cite{BuDaGe21} prove that if the drift is bounded globally $\alpha$-H\"older continuous with $\alpha \in (0,1]$, then for any $p \geq 1$,
\begin{align*}
\bE\Big[\sup_{t \in [0, T]}|X_{t}-X_{t}^{n}|^{p}\Big]^{1/p}
\lesssim
\left\{\begin{array}{ll}
\displaystyle
n^{-1/2+\varepsilon},
&\text{ if } \sigma \in C^{2}_{b} \text{ and uniformly elliptic},\\
\displaystyle
n^{-(1+\alpha)/2+\varepsilon},
&\text{ if } \sigma \text{ is the identity matrix},
\end{array}\right.
\end{align*}
(see \cite[Theorem 2.1 and Theorem 2.7]{BuDaGe21}).
Babi, Dieye and Menoukeu Pamen \cite{BaDiPa23} prove that if $b \in C^{\alpha}(\bR^{d};\bR^{d})$ with $\alpha \in (0,1)$ and $\sigma=I_{d}$, then for any $\varepsilon \in (0,1/2)$, there exsists $T_{\varepsilon}>0$ such that
\begin{align*}
\sup_{t \in [0, T_{\varepsilon}]}
\bE[|X_{t}-X_{t}^{n}|^{2}]^{1/2}
\lesssim
n^{-1/2+\varepsilon},
\end{align*}
(see \cite[Theorem 1.1]{BaDiPa23}).
Therefore, Theorem \ref{main_thm00} is a generalization of the above results.

Moreover, Ellinger, M{\"u}ller-Gronbach and Yaroslavtseva \cite{ElMuYa25} prove that if $d=1$ and $\sigma=1$, then there exists a bounded $\alpha$-H\"older continuous function $b$ such that
\begin{align*}
\inf_{g:\bR^{n} \to \bR \text{ measurable }}
\bE[|X_{T}-g(B_{t_{1}},\ldots,B_{t_{n}})|]
\gtrsim
n^{-(1+\alpha)/2},
\end{align*}
(see \cite[Theorem 1]{ElMuYa25}).
Therefore, the convergence rate in Theorem \ref{main_thm00} is almost optimal.

\end{Rem}

\section{Proof of main theorem}\label{Sec_3}

In this section, we provide the proof of Theorem \ref{main_thm00}.

\subsection{Some auxiliary estimates}\label{Sec_3_1}

In this section, we provide some auxiliary estimates for the Euler--Maruyama scheme \eqref{em_01}.
The following lemma can be shown by using standard stochastic calculus.

\begin{Lem}\label{lem_dif_em}
We suppose that the coefficients $b$ and $\sigma$ are of linear growth.
Then for any $p \geq 1$, there exists a constant $C_{p} > 0$ such that for any $0 \leq s \leq t \leq T$,
\begin{align*}
\sup_{n \in \bN}
\sup_{t \in [0, T]}\bE[|X_t^n|^p]
\leq
C_{p}
\text{ and }
\sup_{n \in \bN}
\bE[|X_{t}^{n} - X_{s}^{n}|^p]
\leq
C_{p} (t-s)^{p/2}.
\end{align*}
\end{Lem}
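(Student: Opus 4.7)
The plan is to use standard stochastic calculus arguments mirroring the classical moment estimates for SDE solutions, adapted to the piecewise-constant coefficients of the Euler--Maruyama scheme. By Jensen's inequality, it suffices to prove both claims for $p \geq 2$.

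First I would establish the uniform moment bound. Writing the scheme in integral form
\begin{equation*}
X_t^n = x_0 + \int_0^t b(X_{\kappa_n(u)}^n)\,\rd u + \int_0^t \sigma(X_{\kappa_n(u)}^n)\,\rd B_u,
\end{equation*}
I would raise both sides to the $p$-th power and apply the elementary inequality $|a+b+c|^p \lesssim_p |a|^p + |b|^p + |c|^p$. For the Lebesgue integral I would use H\"older's inequality, and for the stochastic integral the Burkholder--Davis--Gundy inequality followed by H\"older. The linear growth of $b$ and $\sigma$ then gives
\begin{equation*}
\bE[|X_t^n|^p] \leq C_p\Big(1 + |x_0|^p + \int_0^t \bE\big[1 + |X_{\kappa_n(u)}^n|^p\big]\,\rd u\Big) \leq C_p\Big(1 + \int_0^t \sup_{v \leq u}\bE[|X_v^n|^p]\,\rd u\Big),
\end{equation*}
uniformly in $n$, since $\kappa_n(u) \leq u$. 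Applying Gronwall's lemma to the function $u \mapsto \sup_{v \leq u}\bE[|X_v^n|^p]$ (which is finite on $[0,T]$ by a localization argument using stopping times $\tau_N = \inf\{t : |X_t^n| \geq N\}$) yields the first bound.

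For the increment estimate, I would apply the same decomposition on $[s,t]$:
\begin{equation*}
\bE[|X_t^n - X_s^n|^p] \leq C_p\,\bE\Big[\Big|\int_s^t b(X_{\kappa_n(u)}^n)\,\rd u\Big|^p\Big] + C_p\,\bE\Big[\Big|\int_s^t \sigma(X_{\kappa_n(u)}^n)\,\rd B_u\Big|^p\Big].
\end{equation*}
H\"older gives the first term bounded by $(t-s)^{p-1}\int_s^t \bE[|b(X_{\kappa_n(u)}^n)|^p]\rd u \leq C_p(t-s)^p \leq C_p T^{p/2}(t-s)^{p/2}$, using linear growth together with the moment bound just established. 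For the stochastic term, BDG and H\"older give $C_p(t-s)^{p/2 - 1}\int_s^t \bE[|\sigma(X_{\kappa_n(u)}^n)|^p]\rd u \leq C_p(t-s)^{p/2}$. Combining these yields the desired estimate.

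I do not anticipate any real obstacle: the proof is essentially identical to the classical case for SDEs, the key observation being that $\kappa_n(u) \leq u$ ensures $X_{\kappa_n(u)}^n$ is $\cF_u$-adapted so BDG applies directly, and that the constants produced depend only on $p$, $T$, $|x_0|$, and the linear growth constants of $b$ and $\sigma$, hence are independent of $n$. The only technical subtlety worth flagging is the standard localization by stopping times needed to justify taking expectations before deriving the Gronwall-type inequality.
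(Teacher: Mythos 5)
Your proposal is correct and is essentially the ``standard stochastic calculus'' argument that the paper explicitly declines to spell out, so there is nothing to compare against: the paper simply states the lemma with the remark that it follows from standard computations. Your decomposition into drift and martingale parts, the use of H\"older plus Burkholder--Davis--Gundy, the observation that $\kappa_n(u)\le u$ makes the integrands adapted and lets the Gronwall iterate close uniformly in $n$, and the reduction of the increment estimate to the uniform moment bound are all exactly the intended route; the localization by stopping times to justify finiteness before invoking Gronwall is the right technical caveat (equivalently, one could run Gronwall on $\bE[\sup_{v\le u}|X^n_v|^p]$ directly, or argue by finite induction over the grid points, but these are cosmetic variants of the same argument).
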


Next, we provide the following quadrature estimate.

\begin{Lem}\label{lem_p_em}
We suppose that the coefficient $b$ and $\sigma$ are of linear growth.
Let $f:\bR^{d} \to \bR^{d}$ be a linear growth measurable function and $g:\bR^{d}\times \bR^{d} \to \bR$ be a bounded measurable function.
Then for any $p \geq 1$, there exists a constant $C_{p} > 0$ such that for any $0 \leq s \leq t \leq T$ and $n \in \bN$,
\begin{align*}
&
\Big\|
\int_{s}^{t}
\Big\{f(X_{r}^{n})-f(X_{\kappa_n(r)}^{n})\Big\}
g(X_{r}^{n},X_{s}^{n})
\rd r
\Big\|_{L^{p}}
\leq
C_{p}
\|(1+|\cdot|)^{-1}f(\cdot)\|_{\infty}
\|g\|_{\infty}
(t - s).
\end{align*}
\end{Lem}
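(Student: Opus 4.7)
The plan is to reduce the estimate to a pointwise bound exploiting linear growth of $f$ together with the uniform moment control from Lemma \ref{lem_dif_em}, rather than using any regularity of $f$ or any density of the Euler--Maruyama scheme. First, I would apply Minkowski's integral inequality to bring the $L^{p}$-norm inside the time integral:
\begin{align*}
\Big\|
\int_{s}^{t}
\{f(X_{r}^{n}) - f(X_{\kappa_{n}(r)}^{n})\}
g(X_{r}^{n}, X_{s}^{n})
\rd r
\Big\|_{L^{p}}
\leq
\int_{s}^{t}
\big\|
\{f(X_{r}^{n}) - f(X_{\kappa_{n}(r)}^{n})\}
g(X_{r}^{n}, X_{s}^{n})
\big\|_{L^{p}}
\rd r.
\end{align*}

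Next, using the boundedness of $g$ and the linear-growth inequality $|f(x)| \leq \|(1 + |\cdot|)^{-1} f(\cdot)\|_{\infty} (1 + |x|)$ together with the triangle inequality, the integrand is dominated by
\begin{align*}
\|g\|_{\infty}
\|(1 + |\cdot|)^{-1} f(\cdot)\|_{\infty}
\big\|
2 + |X_{r}^{n}| + |X_{\kappa_{n}(r)}^{n}|
\big\|_{L^{p}}.
\end{align*}
Lemma \ref{lem_dif_em} then supplies a uniform-in-$n$ and uniform-in-$r \in [0,T]$ bound on the $L^{p}$-moments of $X_{r}^{n}$ and $X_{\kappa_{n}(r)}^{n}$, so the integrand is controlled by a constant multiple of $\|g\|_{\infty} \|(1 + |\cdot|)^{-1} f(\cdot)\|_{\infty}$, and integration over $[s,t]$ produces the desired factor $(t - s)$.

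There is no substantive obstacle here: since $f$ is only assumed measurable and of linear growth, with no continuity modulus, the difference $f(X_{r}^{n}) - f(X_{\kappa_{n}(r)}^{n})$ cannot be made small via the step size $r - \kappa_{n}(r) \leq T/n$, and a crude pointwise estimate is essentially optimal under these hypotheses. The usefulness of the lemma lies in its later role as a remainder term, to be paired with sharper density-based arguments applied to the frozen integrand $f(X_{\kappa_{n}(r)}^{n}) g(X_{r}^{n}, X_{s}^{n})$.
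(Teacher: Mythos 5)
Your proposal is correct and follows essentially the same route as the paper's proof: Minkowski's integral inequality, then the linear-growth bound $|f(x)| \leq \|(1+|\cdot|)^{-1}f(\cdot)\|_{\infty}(1+|x|)$ applied to each of $f(X_r^n)$ and $f(X_{\kappa_n(r)}^n)$ separately, then the uniform moment bound from Lemma~\ref{lem_dif_em}. Your closing remark on why a crude triangle-inequality bound is the right move here (since $f$ is only measurable) also matches the role this lemma plays in the paper.
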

\begin{proof}
By using Lemma \ref{lem_dif_em}, we have
\begin{align*}
&\Big\|
\int_{s}^{t}
\Big\{f(X_{r}^{n})-f(X_{\kappa_n(r)}^{n})\Big\}
g(X_{r}^{n},X_{s}^{n})
\rd r
\Big\|_{L^{p}}
\\&\leq
\int_s^t
\Big\|\{f(X_r^n) - f(X_{\kappa_n(r)}^n)\}g(X_{r}^{n},X_{s}^{n})\Big\|_{L^p}
\rd r\\
&\leq
\|(1+|\cdot|)^{-1}f(\cdot)\|_{\infty}
\|g\|_{\infty}
\int_s^t
\Big\{2+\|X_r^n\|_{L^{p}} + \|X_{\kappa_n(r)}^n\|_{L^{p}}\Big\}
\rd r\\
&\lesssim
\|(1+|\cdot|)^{-1}f(\cdot)\|_{\infty}
\|g\|_{\infty}
(t - s).
\end{align*}
This completes the proof.
\end{proof}

\subsection{Quadrature estimates}\label{Sec_3_2}

The goal of this section is to prove the following quadrature estimate:
suppose that Assumption \ref{asm_01} holds and let $f \in C^{\alpha}(\bR^{d};\bR)$ with $\alpha \in (0,1]$ and $g:\bR^{d} \to \bR$ be a bounded global Lipschitz continuous function, then it holds that
\begin{align*}
\sup_{t \in [0, T]}
\Big\|
\int_{0}^{t}
\Big\{f(X_{r}^{n})-f(X_{\kappa_n(r)}^{n})\Big\}
g(X_{r}^{n})
\rd r
\Big\|_{L^{p}}
\lesssim
([f]_{\alpha} \vee |f(0)|)(\|g\|_\infty + [g]_{\mathrm{Lip}}) n^{-\frac{1 + \alpha}{2} + \varepsilon},
\end{align*}
(see Proposition \ref{lem_quadratic_em}).
We first prove the above estimate for the Euler-Maruyama scheme without drift, and then we extend it to $X^n$ by  using Maruyama--Girsanov theorem.

Let $Y^{n}=(Y^{n}_{t})_{t \in [0,T]}$ be the Euler-Maruyama scheme without drift coefficient, that is,
\begin{equation*}
\rd Y_t^n = \sigma(Y_{\kappa_n(t)}^n)\rd B_t,~Y_{0}^{n}=x_{0},~t \in [0,T].
\end{equation*}
Then we obtain the following quadrature estimate for $Y^{n}$.

\begin{Prop}\label{lem_Lp}
Let $f \in C^{\alpha}(\bR^{d};\bR)$ with $\alpha \in (0,1]$ and $g:\bR^{d} \to \bR$ be a bounded and global Lipschitz continuous function.
We suppose that the diffusion coefficient $\sigma$ satisfies Assumption \ref{asm_01} (iii) and (iv).
Then for any $p \geq 1$ and $\varepsilon \in (0,(1+\alpha)/2)$, there exists a constant $C_{p,\varepsilon} > 0$ such that for any $t \in [0, T]$ and $n \in \bN$,
\begin{align}\label{ineq_prop}
\sup_{t \in [0,T]}
\Big\|
\int_{0}^{t}
\Big\{f(Y_{r}^{n})-f(Y_{\kappa_n(r)}^{n})\Big\}
g(Y_{r}^{n})
\rd r
\Big\|_{L^{p}}
\leq
C_{p,\varepsilon}
([f]_{\alpha} \vee |f(0)|)(\|g\|_\infty + [g]_{\mathrm{Lip}}) n^{-\frac{1 + \alpha}{2} + \varepsilon}.
\end{align}
\end{Prop}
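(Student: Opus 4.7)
The plan is to apply L\'e's stochastic sewing lemma (SSL) to
\[
\cA_t := \int_0^t \{f(Y_r^n) - f(Y_{\kappa_n(r)}^n)\} g(Y_r^n) \rd r,
\qquad
A_{s,t} := \bE^s[\cA_t - \cA_s].
\]
Because $A_{s,t}$ is itself a conditional expectation, the SSL consistency condition $\bE^s[A_{s,t} - A_{s,u} - A_{u,t}] = 0$ for $0 \leq s \leq u \leq t$ holds automatically by the tower property. Thus the whole argument reduces to establishing a ``size'' estimate of the form
\[
\|A_{s,t}\|_{L^p} \lesssim ([f]_\alpha \vee |f(0)|)(\|g\|_\infty + [g]_{\mathrm{Lip}})(t-s)^{1/2+\delta}\, n^{-(1+\alpha)/2 + \varepsilon}
\]
for some $\delta > 0$, after which the SSL yields \eqref{ineq_prop} by combining it with the bound on $\|A_{0,t}\|_{L^p}$.

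To estimate $A_{s,t}$, decompose the integrand as
\[
\{f(Y_r^n) - f(Y_{\kappa_n(r)}^n)\}\{g(Y_r^n) - g(Y_{\kappa_n(r)}^n)\} + \{f(Y_r^n) - f(Y_{\kappa_n(r)}^n)\} g(Y_{\kappa_n(r)}^n).
\]
The first (remainder) summand is handled directly: combining the $\alpha$-H\"older continuity of $f$ with estimate \eqref{eq_linear_1} for displacements $>1$, Lipschitz continuity of $g$, and the Gaussian moment bound from Lemma \ref{lem_dif_em} gives $L^p$-norm $\lesssim [f]_\alpha[g]_{\mathrm{Lip}} n^{-(1+\alpha)/2}$, which is within the target after integration over $r$. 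For the second (main) summand, conditioning on $\cF_{\kappa_n(r)}$ uses that $Y_r^n - Y_{\kappa_n(r)}^n \mid \cF_{\kappa_n(r)}$ is Gaussian with covariance $a(Y_{\kappa_n(r)}^n)(r-\kappa_n(r))$, and writes
\[
\bE^{\kappa_n(r)}\!\big[f(Y_r^n) - f(Y_{\kappa_n(r)}^n)\big] = \Psi_h(Y_{\kappa_n(r)}^n),
\quad
\Psi_h(z) := \int\{f(z+y) - f(z)\}\, g_{h\,a(z)}(y)\,\rd y,
\]
with $h := r - \kappa_n(r) \leq T/n$. Taking the further conditional expectation $\bE^s[\cdot]$ against the transition density $p^n_{\kappa_n(r)-s}(Y_s^n,\cdot)$ of the drift-free scheme, which satisfies the Gaussian heat-kernel and derivative bounds of Konakov--Mammen \cite{KoMa02}, one integrates by parts to transfer a spatial derivative from $\Psi_h \cdot g$ onto the density. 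Combining the H\"older-type bound $|\Psi_h| \lesssim [f]_\alpha h^{\alpha/2}$ (refined via symmetry of the Gaussian / a second integration against the kernel to gain an extra $h^{1/2-\varepsilon}$) with the derivative factor $(\kappa_n(r)-s)^{-1/2}$ gives
\[
\big|\bE^s[\Psi_h(Y_{\kappa_n(r)}^n)\,g(Y_{\kappa_n(r)}^n)]\big|
\lesssim ([f]_\alpha \vee |f(0)|)(\|g\|_\infty + [g]_{\mathrm{Lip}})\, h^{(1+\alpha)/2-\varepsilon}\,(\kappa_n(r)-s)^{-1/2}.
\]
The sub-interval on which $\kappa_n(r) = \kappa_n(s)$ is handled separately using the trivial Lemma \ref{lem_p_em}. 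Integrating over $r\in[s,t]$ (the singularity $(\kappa_n(r)-s)^{-1/2}$ is integrable) produces the required size estimate, and the SSL concludes the proof.

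The main obstacle is the final heat-kernel step. Because of the multiplicative noise, the Gaussian kernel $g_{h\,a(z)}$ inside $\Psi_h$ is itself $z$-dependent, so the spatial integration by parts against $p^n_{\kappa_n(r)-s}$ must pass through this $z$-dependence: one has to differentiate carefully using $\sigma \in C_b^3$, commute derivatives between $a$ and $\Psi_h$, and combine the resulting terms with the Konakov--Mammen derivative estimates in order to recover the sharp exponent $(1+\alpha)/2 - \varepsilon$ rather than the naive $\alpha/2$ coming from plain H\"older continuity; the $\varepsilon$-loss arises when balancing singular kernels by a Young-type inequality.
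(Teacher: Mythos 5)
Your overall approach mirrors the paper's: stochastic sewing, conditioning on $\cF_{\kappa_n(r)}$ to produce the Gaussian average $\Psi_h$, a further conditional expectation against the Konakov--Mammen transition density $p^n(t_{k+1},w,\kappa_n(r),\cdot)$ from the \emph{next grid point} $t_{k+1}$ (the drift-free scheme is Markov only on the grid, so the density cannot be taken from $s$ directly), and an integration by parts after a mean-value step to upgrade $h^{\alpha/2}$ to $h^{(1+\alpha)/2}$. Your germ $A_{s,t}=\bE^s[\cA_t-\cA_s]$ is a legitimate variant of the paper's germ $A_{s,t}=\bE^s[\int_s^t\{f(Y_r^n)-f(Y_{\kappa_n(r)}^n)\}g(Y_s^n)\rd r]$: yours makes \eqref{sew_2} vanish by the tower property, at the cost of carrying $g(z)$ through the integration by parts and generating an extra (harmless) $\nabla g$ term; the paper's allows $g(Y_s^n)$ to be pulled out of the kernel computation entirely, at the cost of a short but nontrivial \eqref{sew_2} check. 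The two routes are comparable in effort, and the substantive technical content (heat-kernel bounds and the integration by parts against the $z$-dependent Gaussian $g_{a(z)h}$) is identical.

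The concrete gap is in your treatment of the near-diagonal block. First, the transition-density argument requires $\kappa_n(r)>t_{k+1}$, so the block needing separate treatment is $[s,t_{k+2})$, not just $\{\kappa_n(r)=\kappa_n(s)\}=[s,t_{k+1})$. More importantly, bounding this block by Lemma \ref{lem_p_em} does not work: that lemma uses only the linear growth of $f$ and yields a bound $\lesssim(t-s)$ with \emph{no gain in $n$}. Since the block has length $\lesssim n^{-1}$, it contributes at best $\lesssim n^{-1/2+\varepsilon}|t-s|^{1/2+\varepsilon}$ to $\Gamma_1$, which for every $\alpha>0$ is strictly weaker than the required $n^{-(1+\alpha)/2+\varepsilon}|t-s|^{1/2+\varepsilon}$ and would cap the final rate at $1/2-\varepsilon$. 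You must instead use the $\alpha$-H\"older continuity of $f$ on $\{|Y_r^n-Y_{\kappa_n(r)}^n|\le 1\}$ together with the exponential tail estimate of Lemma \ref{lem_tail} for the complement, which gives $\lesssim [f]_\alpha n^{-\alpha/2}(t_{k+2}-s)$; the interpolation $(t_{k+2}-s)\le (2T/n)^{1/2-\varepsilon}|t-s|^{1/2+\varepsilon}$ then delivers the correct $n^{-(1+\alpha)/2+\varepsilon}|t-s|^{1/2+\varepsilon}$. Incidentally, the kernel estimate itself gives the clean $h^{(1+\alpha)/2}$ rather than $h^{(1+\alpha)/2-\varepsilon}$; the $\varepsilon$-loss you attribute to it actually enters through this interpolation and through converting $(t-t_{k+2})^{1/2}$ into $n^\varepsilon|t-s|^{1/2+\varepsilon}$.
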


We prove this proposition by using the following stochastic sewing lemma introduced by L\'e in \cite{Le20} similar to the proof of \cite[Lemma 6.1]{Ho24}.

\begin{Lem}[Stochastic sewing lemma, \text{\cite[Theorem 2.3]{Le20}}]\label{lem_stochastic_sewing}
Let $m \geq 2$ and $(A_{s, t})_{0 \leq s \leq t \leq T}$ be a two parameter stochastic process with values in $\bR^{d}$ which is $L^{m}$-integrable and adapted to the filteration $\bF=(\cF_t)_{t \in [0,T]}$, that is, for each $0\leq s \leq t \leq T$, $\bE[|A_{s,t}|^{m}]<\infty$ and $A_{s,t}$ is $\cF_{t}$-measurable.
Assume that there exist constants $\Gamma_{1}, \Gamma_{2} \geq 0$, $\varepsilon_{1}, \varepsilon_{2} > 0$ such that for every $0 \leq s\leq u \leq t \leq T$,
\begin{align}
\|A_{s, t}\|_{L^m}
&\leq
\Gamma_{1}
|t - s|^{1/2 + \varepsilon_1},
\label{sew_1}
\\
\|\bE^{s}[A_{s,t}-A_{s,u}-A_{u,t}]\|_{L^m}
&\leq
\Gamma_2|t - s|^{1 + \varepsilon_2}.
\label{sew_2}
\end{align}
Then there exists a unique stochastic process $\cA=(\cA_t)_{t\in [0,T]}$ with values in $\bR^d$
satisfying the following properties.
\begin{itemize}
\item[(i)]
$\cA_0 = 0$ and $\cA_t$ is $\cF_t$-measurable and $L^m$-integrable.
\item[(ii)]
There are constants $\Gamma_3, \Gamma_4 \ge 0$ such that for every $0 \le s < t \le T$,
\begin{align}
\|\cA_t - \cA_s - A_{s, t}\|_{L^m}
&\leq
\Gamma_{3}
|t - s|^{1/2 + \varepsilon_1}
+
\Gamma_{4}|t - s|^{1 + \varepsilon_2},
\label{sew_3}\\
\|\bE^{s}[\cA_t - \cA_s - A_{s, t}]\|_{L^m} 
&\leq
\Gamma_4|t - s|^{1 + \varepsilon_2}.
\label{sew_4}
\end{align}
\end{itemize}
Moreover, there exists a constant $K>0$ depending only on $\varepsilon_{1}, \varepsilon_{2}$ and $d$ such that for any $0 \leq s \leq t \le T$,
\begin{align}\label{sew_5}
\|\cA_t - \cA_s\|_{L^m}
\leq
K \Gamma_1|t - s|^{1/2 + \varepsilon_1}
+K \Gamma_2|t - s|^{1 + \varepsilon_2}.
\end{align}
\end{Lem}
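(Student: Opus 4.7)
The plan is to construct $\cA$ as the $L^m$-limit of Riemann-type sums along dyadic refinements, which is the strategy of L\'e's original proof and the stochastic analogue of Gubinelli's deterministic sewing. For a partition $\pi=\{s=r_{0}<r_{1}<\cdots<r_{N}=t\}$ of $[s,t]$, I would write $I^{\pi}_{s,t}:=\sum_{i=0}^{N-1}A_{r_{i},r_{i+1}}$ and, for $u\in(s,t)$, set $(\delta A)_{s,u,t}:=A_{s,t}-A_{s,u}-A_{u,t}$. Let $\pi_{n}$ denote the dyadic partition of $[s,t]$ into $2^{n}$ equal subintervals, with mesh points $r^{n}_{k}$ and midpoints $u^{n}_{k}:=(r^{n}_{k}+r^{n}_{k+1})/2$.

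The heart of the argument is to control
\begin{equation*}
I^{\pi_{n}}_{s,t}-I^{\pi_{n+1}}_{s,t}
=
\sum_{k=0}^{2^{n}-1}(\delta A)_{r^{n}_{k},u^{n}_{k},r^{n}_{k+1}}
\end{equation*}
by splitting each summand as $(\delta A)_{r^{n}_{k},u^{n}_{k},r^{n}_{k+1}}=M^{n}_{k}+Z^{n}_{k}$, where $M^{n}_{k}:=\bE^{r^{n}_{k}}[(\delta A)_{r^{n}_{k},u^{n}_{k},r^{n}_{k+1}}]$ is controlled directly by \eqref{sew_2} and $Z^{n}_{k}$ is $\cF_{r^{n}_{k+1}}$-measurable with $\bE^{r^{n}_{k}}[Z^{n}_{k}]=0$. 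The triangle inequality applied to the $M^{n}_{k}$'s yields
\begin{equation*}
\Big\|\sum_{k}M^{n}_{k}\Big\|_{L^{m}}
\leq
2^{n}\Gamma_{2}(2^{-n}(t-s))^{1+\varepsilon_{2}}
=
\Gamma_{2}(t-s)^{1+\varepsilon_{2}}2^{-n\varepsilon_{2}},
\end{equation*}
while the partial sums of the $Z^{n}_{k}$'s form a discrete martingale with respect to $(\cF_{r^{n}_{k}})_{k}$; the discrete Burkholder--Davis--Gundy inequality combined with Minkowski's inequality in $L^{m/2}$ (valid since $m\geq 2$) together with \eqref{sew_1} give
\begin{equation*}
\Big\|\sum_{k}Z^{n}_{k}\Big\|_{L^{m}}
\lesssim
\Big(\sum_{k}\|Z^{n}_{k}\|_{L^{m}}^{2}\Big)^{1/2}
\lesssim
\Gamma_{1}(t-s)^{1/2+\varepsilon_{1}}2^{-n\varepsilon_{1}}.
\end{equation*}

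Both bounds are geometric in $n$, so $(I^{\pi_{n}}_{0,t})_{n}$ is Cauchy in $L^{m}$; define $\cA_{t}$ as its limit, with $\cA_{0}:=0$, obtaining an adapted, $L^{m}$-integrable process and hence property (i). The same telescoping on $[s,t]$ produces \eqref{sew_3} with constants $\Gamma_{3}\propto\Gamma_{1}$ and $\Gamma_{4}\propto\Gamma_{2}$ (up to constants depending on $\varepsilon_{1},\varepsilon_{2},d,m$); since conditioning on $\cF_{s}$ annihilates the $Z^{n}_{k}$-contributions, only the conditional-mean piece survives, giving \eqref{sew_4}; the uniform bound \eqref{sew_5} then follows by combining \eqref{sew_3} with \eqref{sew_1}. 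For uniqueness, if $\widetilde{\cA}$ also satisfies (i)-(ii), the difference $D_{t}:=\cA_{t}-\widetilde{\cA}_{t}$ obeys $\|D_{t}-D_{s}\|_{L^{m}}\lesssim|t-s|^{1/2+\varepsilon_{1}}+|t-s|^{1+\varepsilon_{2}}$ and $\|\bE^{s}[D_{t}-D_{s}]\|_{L^{m}}\lesssim|t-s|^{1+\varepsilon_{2}}$ by subtracting the two instances of \eqref{sew_3}--\eqref{sew_4}; partitioning $[s,t]$ into $N$ equal parts and applying the same conditional-mean/martingale-difference split shows $\|D_{t}-D_{s}\|_{L^{m}}\lesssim N^{-\varepsilon_{1}\wedge\varepsilon_{2}}\to 0$ as $N\to\infty$, so $D\equiv 0$.

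The main obstacle is the martingale-difference bound: one needs a discrete Burkholder--Davis--Gundy estimate whose constant is independent of the number of summands, and the strict positivity $\varepsilon_{1}>0$ (not merely $\geq 0$) is essential for the geometric series to converge. A routine but necessary additional check is that the limit $\cA_{t}$ is independent of the particular partition sequence used and that the increment identity of (ii) holds for arbitrary $s\leq t$; both points are handled by comparing two partitions through a common refinement and reapplying the same splitting trick to the resulting difference.
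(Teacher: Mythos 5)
The paper states this lemma with a citation to L\^e~\cite[Theorem 2.3]{Le20} and gives no proof of its own, so there is no internal argument to compare against; your sketch reconstructs, in outline, exactly the strategy of L\^e's original proof (dyadic refinement, decomposition of $\delta A$ into a conditional-mean part bounded via \eqref{sew_2} and a martingale-difference part bounded via discrete Burkholder--Davis--Gundy plus Minkowski and \eqref{sew_1}, geometric summation, uniqueness by the same splitting on a fine equipartition). The argument is correct. One small inaccuracy worth flagging: the Burkholder--Davis--Gundy constant in $L^m$ depends on $m$, so the constant $K$ you obtain in \eqref{sew_5} in fact depends on $m$ as well as on $\varepsilon_1,\varepsilon_2,d$; the paper's phrasing of the lemma elides this, but your derivation should acknowledge the $m$-dependence rather than inherit that imprecision.
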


We also need the following lemma in order to deal with the uniformly local H\"older continuity of $f$.

\begin{Lem}\label{lem_tail}
Let $f \in C^{\alpha}(\bR^{d};\bR)$ with $\alpha \in (0,1]$.
We suppose that the diffusion coefficient $\sigma$ satisfies Assumption \ref{asm_01} (iv).
Then for any $p \geq 1$, there exists $C_{p}>0$ such that for any $0\leq s \leq r \leq T$,
\begin{align*}
\Big\|
\bE^{s}\Big[
\Big\{f(Y_r^n) - f(Y_{\kappa_n(r)}^n)\Big\}
1_{\{|Y_r^n - Y_{\kappa_n(r)}^n| > 1\}}
\Big]
\Big\|_{L^p}
\leq
C_{p}([f]_{\alpha} \vee |f(0)|)
\exp\Big(-\frac{n}{8 p \lambda T}\Big).
\end{align*}
\end{Lem}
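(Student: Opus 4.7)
The plan is to reduce the conditional expectation to an unconditional $L^{p}$ norm, then use H\"older's inequality to split the integrand into a H\"older-increment factor and a Gaussian tail factor. The tail factor is tiny because $Y_r^n - Y_{\kappa_n(r)}^n = \sigma(Y_{\kappa_n(r)}^n)(B_r - B_{\kappa_n(r)})$ is of order $\sqrt{T/n}$, so $\{|Y_r^n - Y_{\kappa_n(r)}^n|>1\}$ is a rare event.

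Concretely, conditional Jensen's inequality and the tower property give
$$\Big\|\bE^{s}\Big[\{f(Y_r^n) - f(Y_{\kappa_n(r)}^n)\}1_{\{|Y_r^n - Y_{\kappa_n(r)}^n| > 1\}}\Big]\Big\|_{L^p} \leq \Big\|\{f(Y_r^n) - f(Y_{\kappa_n(r)}^n)\}1_{\{|Y_r^n - Y_{\kappa_n(r)}^n| > 1\}}\Big\|_{L^p},$$
and Cauchy--Schwarz then bounds this by
$$\|f(Y_r^n) - f(Y_{\kappa_n(r)}^n)\|_{L^{2p}}\,\bP(|Y_r^n - Y_{\kappa_n(r)}^n| > 1)^{1/(2p)}.$$
For the first factor I would use the linear-growth estimate \eqref{eq_linear_1}, namely $|f(x)| \leq |f(0)| + [f]_\alpha(1+|x|)$, together with the uniform moment bound on $Y_r^n$ and $Y_{\kappa_n(r)}^n$ coming from Lemma \ref{lem_dif_em} (applied with drift $0$, which is legitimate since Assumption \ref{asm_01} (iv) forces $|\sigma(x)z| \leq \sqrt{\lambda}|z|$, hence $\sigma$ is bounded and in particular of linear growth). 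This yields $\|f(Y_r^n) - f(Y_{\kappa_n(r)}^n)\|_{L^{2p}} \lesssim_p [f]_\alpha \vee |f(0)|$.

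For the tail factor, the same bound $|\sigma(x) z| \leq \sqrt{\lambda}|z|$ gives pathwise $|Y_r^n - Y_{\kappa_n(r)}^n| \leq \sqrt{\lambda}|B_r - B_{\kappa_n(r)}|$, so $\{|Y_r^n - Y_{\kappa_n(r)}^n| > 1\} \subset \{|B_r - B_{\kappa_n(r)}| > \lambda^{-1/2}\}$. Setting $\tau := r - \kappa_n(r) \in [0, T/n]$, the increment $B_r - B_{\kappa_n(r)}$ is $N(0, \tau I_d)$, and Markov's inequality applied to $\exp(\theta|B_r - B_{\kappa_n(r)}|^2)$ with the choice $\theta = 1/(4\tau)$ yields
$$\bP\big(|B_r - B_{\kappa_n(r)}| > \lambda^{-1/2}\big) \leq (1-2\tau\theta)^{-d/2}\exp(-\theta/\lambda) = 2^{d/2}\exp\Big(-\frac{1}{4\tau\lambda}\Big) \leq 2^{d/2}\exp\Big(-\frac{n}{4\lambda T}\Big).$$
Raising to the $1/(2p)$-th power produces exactly the advertised factor $\exp(-n/(8p\lambda T))$, and multiplying the two estimates completes the proof.

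The argument is entirely routine; the only real obstacle is bookkeeping the constants so that the final exponent coincides with the stated $n/(8p\lambda T)$, which is precisely why the split is made with Cauchy--Schwarz (exponents $(2p,2p)$) rather than with a naive H\"older split.
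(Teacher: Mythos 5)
Your proposal is correct and follows essentially the same route as the paper: conditional Jensen to drop the $\bE^s$, Cauchy--Schwarz to split off the tail event, the linear-growth bound \eqref{eq_linear_1} plus the uniform moment bound of Lemma \ref{lem_dif_em} for the $f$-increment, and a Gaussian tail estimate of order $e^{-n/(4\lambda T)}$. The only cosmetic difference is in the tail: you use the pathwise bound $|Y^n_r - Y^n_{\kappa_n(r)}| \leq \sqrt{\lambda}\,|B_r - B_{\kappa_n(r)}|$ (valid since $\sigma\sigma^\top$ and $\sigma^\top\sigma$ share eigenvalues) followed by a Chernoff bound, whereas the paper conditions on $\cF_{\kappa_n(r)}$ and integrates the Gaussian density directly after bounding $g_{a(z)t} \leq \lambda^d g_{\lambda t}$; both give the same exponent.
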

\begin{proof}
Since $f$ satisfies \eqref{eq_linear_1}, by using Jensen's inequality, H\"older's inequality and Lemma \ref{lem_dif_em}, we have
\begin{align*}
&\bE\Big[
\Big|\bE^{s}\Big[
\Big\{f(Y_r^n) - f(Y_{\kappa_n(r)}^n)\Big\}
1_{\{|Y_r^n - Y_{\kappa_n(r)}^n| > 1\}}
\Big]
\Big|^{p}
\Big]
\leq
\bE\Big[
\Big|f(Y_r^n) - f(Y_{\kappa_n(r)}^n)\Big|^{p}
1_{\{|Y_r^n - Y_{\kappa_n(r)}^n| > 1\}}
\Big]
\\&\leq
\bE\Big[
\Big|f(Y_r^n) - f(Y_{\kappa_n(r)}^n)\Big|^{2p}
\Big]^{1/2}
\bP(|Y_r^n - Y_{\kappa_n(r)}^n| > 1)^{1/2}
\\&\lesssim
([f]_{\alpha} \vee |f(0)|)^{p}
\bP(|Y_r^n - Y_{\kappa_n(r)}^n| > 1)^{1/2}.
\end{align*}
Let $t_{k}=\kappa_{n}(r)$.
Then by the definition of the Euler--Maruyama scheme $Y^{n}$, it holds that
\begin{align*}
Y_r^n-Y_{t_{k}}^n
=
\sigma(Y_{t_{k}}^n)(B_{r}-B_{t_{k}}).
\end{align*}
Since $\sigma(Y_{t_{k}}^n)$ and $B_{r}-B_{t_{k}}$ are independent, we have
\begin{align*}
\bP(|Y_r^n - Y_{\kappa_n(r)}^n| > 1)
&=
\bE\Big[
\bE^{t_{k}}
\Big[
\1_{\{|\sigma(Y_{t_{k}}^n)(B_{r}-B_{t_{k}})|>1\}}
\Big]
\Big]
=
\bE\Big[
\int_{|y|>1}
g_{a(Y_{t_{k}}^n)(r-t_{k})}(y)
\rd y
\Big]
\\&\leq
\sup_{z \in \bR^{d}}
\int_{|y| > 1} g_{a(z)(r-t_{k})}(y)\rd y 
\leq
\lambda^{d} \int_{|y| > 1} g_{\lambda(r-t_{k})}(y)\rd y
\\&\leq
\frac{\lambda^{d}}{(2\pi)^{d/2}}
\int_{\sqrt{\lambda (r-t_{k})}|u|>1}
\exp\Big(-\frac{|u|^2}{2}\Big)
\rd u
\\&\leq
\frac{\lambda^{d}}{(2\pi)^{d/2}}
\exp\Big(-\frac{1}{4\lambda (r-t_{k})}\Big)
\int_{\bR^{d}}
\exp\Big(-\frac{|u|^{2}}{4}\Big)
\rd u
\\&\lesssim
\exp\Big(-\frac{n}{4\lambda T}\Big).
\end{align*}
This concludes the proof.
\end{proof}

Now we are ready to prove Proposition \ref{lem_Lp}.

\begin{proof}[Proof of Proposition \ref{lem_Lp}]
It suffices to prove this proposition when the test function $f$ is sufficiently smooth.
Indeed, let $\rho(x):=C^{-1}\exp(-\frac{1}{1-|x|^{2}})$, $C=\int_{|x|<1} \exp(-\frac{1}{1-|x|^{2}}) \rd x$, define $\rho_{m}(x):=m^{d}\rho(mx)$ and $f_{m}(x):=(\rho_{m}*f)(x)$, $x \in \bR^d$, $m \in \bN$.
Then it holds that $\lim_{m \to \infty}f_{m}(x)=f(x)$, $x \in \bR^{d}$ and for $x,y \in \bR^{d}$ with $|x-y| \leq 1$,
\begin{align*}
|f_{m}(x)-f_{m}(y)|
=
\Big|
\int_{\bR^{d}}
\rho_{m}(z)
\Big\{f(x-z)-f(y-z)\Big\}\rd z
\Big|
\leq
[f]_{\alpha}
\int_{\bR^d}
\rho_{m}(z)
|x-y|^{\alpha}
\rd z
=[f]_{\alpha}
|x-y|^{\alpha}.
\end{align*}
This implies $[f_{m}]_{\alpha} \leq [f]_{\alpha}$.
Hence if the estiamte \eqref{ineq_prop} holds with $f_{m}$ in place of $f$, then by Fatou's lemma, we obtain the estimate for $f$.

Now we prove \eqref{ineq_prop} for $p \geq 2$ and $\varepsilon \in (0, 1/2)$ by using the stochastic sewing lemma (Lemma \ref{lem_stochastic_sewing}).
We define a two parameter stochastic process $(A_{s, t})_{0 \leq s \leq  t \leq T}$ by
\begin{align*}
A_{s, t}
:=
\bE^{s}
\Big[
\int_s^t
\Big\{f(Y_r^n)-f(Y_{\kappa_n(r)}^n)\Big\}
g(Y_s^n)
\rd r
\Big],~0 \leq s \leq  t \leq T.
\end{align*}
Then we prove that $(A_{s, t})_{0 \leq s \leq  t \leq T}$ satisfies the conditions of Lemma \ref{lem_stochastic_sewing}.

We first prove that $(A_{s, t})_{0 \leq s \leq  t \leq T}$ satisfies the condition \eqref{sew_1}.
Let $0 \leq s \leq  t \leq T$ be fixed, and then there exists $k \in \{0, 1, \ldots, n - 1\}$ such that $\kappa_n(s) = t_k$.
We first assume $t > t_{k + 3}$.
Then by using Lemma \ref{lem_tail}, we have
\begin{align}\label{A_st_1}
\|A_{s, t}\|_{L^p}
&\leq
\|g\|_\infty
\int_s^{t}
\Big\{
\Big\|
\bE^{s}\Big[
\Big\{f(Y_r^n) - f(Y_{\kappa_n(r)}^n)\Big\}
1_{\{|Y_r^n - Y_{\kappa_n(r)}^n| \leq 1\}}
\Big]
\Big\|_{L^p}
\notag
\\&\hspace{3cm}+
\Big\|
\bE^{s}\Big[
\Big\{f(Y_r^n) - f(Y_{\kappa_n(r)}^n)\Big\}
1_{\{|Y_r^n - Y_{\kappa_n(r)}^n| > 1\}}
\Big]
\Big\|_{L^p}
\Big\}
\rd r
\notag
\\&\lesssim
\|g\|_\infty
\Big(
\int_{s}^{t_{k+2}}+\int_{t_{k+2}}^{t}
\Big)
\Big\|
\bE^{s}\Big[
\Big\{f(Y_r^n) - f(Y_{\kappa_n(r)}^n)\Big\}
1_{\{|Y_r^n - Y_{\kappa_n(r)}^n| \leq 1\}}
\Big]
\Big\|_{L^p}
\rd r
\notag
\\&\hspace{3cm}+
\|g\|_\infty([f]_{\alpha} \vee |f(0)|)
\exp\Big(-\frac{n}{8 p \lambda T}\Big)
|t-s|.
\end{align}
By using the uniformly local H\"older continuity of $f$ and Lemma \ref{lem_dif_em}, we have
\begin{align}\label{J_1}
\int_{s}^{t_{k+2}}
\Big\|
\bE^{s}\Big[
\Big\{f(Y_r^n) - f(Y_{\kappa_n(r)}^n)\Big\}
1_{\{|Y_r^n - Y_{\kappa_n(r)}^n| \leq 1\}}
\Big]
\Big\|_{L^p}
\rd r
&\leq
[f]_{\alpha}
\int_{s}^{t_{k+2}}
\bE[|Y_r^n-Y_{\kappa_n(r)}^{n}|^{p\alpha}]^{1/p}
\rd r\notag
\\&\lesssim
[f]_{\alpha}
n^{-\alpha/2}
(t_{k + 2} - s) \notag
\\&\leq
[f]_{\alpha}
n^{-\alpha/2}
(t_{k + 2} - t_{k})^{1/2-\varepsilon}
|t-s|^{1/2+\varepsilon}
\notag
\\&
\lesssim [f]_{\alpha}
n^{-\frac{1+\alpha}{2} + \varepsilon}|t - s|^{1/2 + \varepsilon}.
\end{align}
Next, we consider the integral on $(t_{k+2},t]$.
Let $r \in (t_{k + 2}, t]$ with $r-\kappa_{n}(r)>0$.
Then since $\kappa_n(r) > t_{k + 1} > s$, we have
\begin{align*}
&\bE^{s}\Big[
\Big\{f(Y_r^n) - f(Y_{\kappa_n(r)}^n)\Big\}
1_{\{|Y_r^n - Y_{\kappa_n(r)}^n| \leq 1\}}
\Big]
\\&=
\bE^{s}\Big[
\bE^{t_{k+1}}\Big[
\bE^{\kappa_{n}(r)}\Big[
\Big\{f(Y_r^n) - f(Y_{\kappa_n(r)}^n)\Big\}
1_{\{|Y_r^n - Y_{\kappa_n(r)}^n| \leq 1\}}
\Big]
\Big]
\Big]
\\&=
\bE^{s}[h(Y_{t_{k + 1}}^n)],
\end{align*}
where the function $h:\bR^{d} \to \bR$ is defined by
\begin{align*}
h(w)
:=
\int_{\bR^d}
\rmd z
\int_{|y| \leq 1}
\rmd y\,
\Big\{f(z + y) - f(z)\Big\}\,
g_{a(z)(r-\kappa_{n}(r))}(y)\,
p^n(t_{k + 1}, w, \kappa_n(r), z),~w \in \bR^{d},
\end{align*}
and $p^n(t_{k + 1}, w, \kappa_n(r), \cdot)$ is a transition density of $Y^{n}$.
Note that the diffusion coefficient $\sigma$ is assumed to be time independent, thus $(Y^{n}_{t_{k}})_{k=0,\ldots,n}$ is a time inhomogeneous Markov chain, which implies that  $p^n(t_{k + 1}, w, \kappa_n(r), \cdot)$ equals to the density function $p^{n}_{\kappa_{n}(r)-t_{k+1}}(w,\cdot)$ of $Y^{n}_{\kappa_{n}(r)-t_{k+1}}$ with $Y^{n}_{0}=w$.
By the assumptions on the diffusion coefficient $\sigma$ and \cite[Theorem 2.1]{LeMe10}, there exist constants $C, c > 0$ such that for any $0 \leq i < j \leq n$,
\begin{equation*}
g_{a(z)(r-\kappa_{n}(r))}(y)
\leq
C g_{c(r-\kappa_{n}(r))}(y)
\text{ and }
p^n(t_{i}, w, t_{j}, z)
\leq
Cg_{c(t_j - t_i)}(z-w).
\end{equation*}
Therefore, by using the mean value theorem and the integration by parts formula, we have
\begin{align}\label{h_w}
h(w)
&=
\int_{\bR^d} \rmd z
\int_{|y| \leq 1} \rmd y
\int_{0}^{1} \rmd \theta\,
\big\langle y,(\nabla_{z}f)(z + \theta y) \big\rangle\,
g_{a(z)(r-\kappa_{n}(r))}(y)\,
p^n(t_{k + 1}, w, \kappa_n(r), z)
\notag
\\&=
-
\int_{\bR^d} \rmd z
\int_{|y| \leq 1} \rmd y
\int_{0}^{1} \rmd \theta\,
f(z + \theta y)\,
\big\langle
y,
\nabla_{z}
\{g_{a(z)(r-\kappa_{n}(r))}(y)\,p^n(t_{k + 1}, w, \kappa_n(r), z)\}
\big\rangle \notag
\\&=
-\{h_{1}(w)+h_{2}(w)\},
\end{align}
where $h_{1},h_{2}:\bR^{d} \to \bR$ are defined by
\begin{align*}
h_{1}(w)
&:=
\int_{\bR^d} \rmd z
\int_{|y| \leq 1} \rmd y
\int_{0}^{1} \rmd \theta\,
f(z + \theta y)\,
\big\langle
y,
\nabla_{z}g_{a(z)(r-\kappa_{n}(r))}(y)
\big\rangle\,
p^n(t_{k + 1}, w, \kappa_n(r), z),
\\
h_{2}(w)
&:=
\int_{\bR^d} \rmd z
\int_{|y| \leq 1} \rmd y
\int_{0}^{1} \rmd \theta\,
f(z + \theta y)\,
g_{a(z)(r-\kappa_{n}(r))}(y)\,
\big\langle
y,
(\nabla_{z} p^n)(t_{k + 1}, w, \kappa_n(r), z)
\big\rangle.
\end{align*}
We first consider $h_{1}$.
For each $j=1,\ldots,d$, it holds that
\begin{align*}
\partial_{z_j}g_{a(z)(r-\kappa_{n}(r))}(y)
&=
\frac{1}{(2\pi (r-\kappa_{n}(r)))^{d/2}}
\partial_{z_j}\Big\{
\{\det a(z)\}^{-1/2} \exp\Big(-\frac{\langle a(z)^{-1}y, y \rangle}{2(r-\kappa_{n}(r))}\Big)
\Big\}
\notag
\\&=
-\frac{\det(a(z))^{-3/2}}{2(2\pi (r-\kappa_{n}(r)))^{d/2}}
(\partial_{z_j}\det a(z))\,
\exp\Big(-\frac{\langle a(z)^{-1}y, y \rangle}{2(r-\kappa_{n}(r))}\Big)
\notag
\\&\quad-
\frac{\det(a(z))^{-1/2}}{(2\pi (r-\kappa_{n}(r)))^{d/2}}
\frac{\partial_{z_j}\langle a(z)^{-1}y, y \rangle}{2(r-\kappa_{n}(r))}
\exp\Big(-\frac{\langle a(z)^{-1}y, y \rangle}{2(r-\kappa_{n}(r))}\Big)
\notag
\\&=
-\frac{1}{2}
\Big\{
\frac{\partial_{z_j}\det a(z)}{\det a(z)}
+
\frac{\partial_{z_j}\langle a(z)^{-1}y, y \rangle}{r-\kappa_{n}(r)}
\Big\}
g_{a(z)(r-\kappa_{n}(r))}(y).
\end{align*}
Hence we have
\begin{align*}
h_{1}(w)
=
-\frac{1}{2}
\Big\{h_{1,1}(w)+h_{1,2}(w)\Big\},
\end{align*}
where $h_{1,1},h_{1,2}:\bR^{d} \to \bR$ are defined by
\begin{align*}
h_{1,1}(w)
&:=
\int_{\bR^d} \rmd z
\int_{|y| \leq 1} \rmd y
\int_{0}^{1} \rmd \theta\,
f(z + \theta y)\,
\frac{\langle y,\nabla_{z}\det a(z)\rangle}{\det a(z)}\,
g_{a(z)(r-\kappa_{n}(r))}(y)\,
p^n(t_{k + 1}, w, \kappa_n(r), z),
\\
h_{1,2}(w)
&:=
\int_{\bR^d} \rmd z
\int_{|y| \leq 1} \rmd y
\int_{0}^{1} \rmd \theta\,
f(z + \theta y)\,
\frac{\langle y,\nabla_{z}\langle a(z)^{-1}y, y \rangle \rangle}{r-\kappa_{n}(r)}\,
g_{a(z)(r-\kappa_{n}(r))}(y)\,
p^n(t_{k + 1}, w, \kappa_n(r), z).
\end{align*}
We estimate $h_{1,1}$.
Note that for each $j=1,\ldots,d$,
\begin{align*}
\int_{\bR^d} \rmd z
\int_{|y| \leq 1} \rmd y
\int_{0}^{1} \rmd \theta\,
f(z)\,
\frac{y_{j}\,\partial_{z_j}\det a(z)}{\det a(z)}\,
g_{a(z)(r-\kappa_{n}(r))}(y)\,
p^n(t_{k + 1}, w, \kappa_n(r), z)
=0
\end{align*}
and the assumptions on the diffusion coefficient $\sigma$, the map $\bR^{d} \ni z \mapsto \nabla_{z}\det a(z)/\det a(z) \in \bR^{d}$ is bounded.
Hence, by using the uniformly local H\"older continuity of $f$, we have
\begin{align}\label{H_1_1}
|h_{1,1}(w)|
&=
\Big|
\int_{\bR^d} \rmd z
\int_{|y| \leq 1} \rmd y
\int_{0}^{1} \rmd \theta\,
\{f(z + \theta y)-f(z)\}
\notag
\\&\hspace{3cm}
\times
\frac{\langle y, \nabla_{z}\det a(z)\rangle}{\det a(z)}\,
g_{a(z)(r-\kappa_{n}(r))}(y)\,
p^n(t_{k + 1}, w, \kappa_n(r), z)
\Big|
\notag
\\&\lesssim
\int_{\bR^d} \rmd z
\int_{|y| \leq 1} \rmd y
\int_{0}^{1} \rmd \theta\,
|f(z + \theta y)-f(z)|
|y|\,
g_{c(r-\kappa_{n}(r))}(y)\,
p^n(t_{k + 1}, w, \kappa_n(r), z)
\notag
\\&\leq
[f]_{\alpha}
\int_{\bR^d} \rmd z
\int_{|y| \leq 1} \rmd y\,
|y|^{1+\alpha}\,
g_{c(r-\kappa_{n}(r))}(y)\,
p^n(t_{k + 1}, w, \kappa_n(r), z)
\notag
\\&\lesssim
[f]_{\alpha}n^{-\frac{1+\alpha}{2}}.
\end{align}
Next we estimate $h_{1,2}$.
Note that for each $j=1,\ldots,d$,
\begin{align*}
&\int_{\bR^d} \rmd z
\int_{|y| \leq 1} \rmd y
\int_{0}^{1} \rmd \theta\,
f(z)\,
\frac{y_{j}\,\partial_{z_j}\langle a(z)^{-1}y, y \rangle}{r-\kappa_{n}(r)}\,
g_{a(z)(r-\kappa_{n}(r))}(y)\,
p^n(t_{k + 1}, w, \kappa_n(r), z)
\\&=
\sum_{m,\ell=1}^{d}
\int_{\bR^d} \rmd z
\int_{|y| \leq 1} \rmd y
\int_{0}^{1} \rmd \theta\,
f(z)\,
y_{j}y_{m}y_{\ell}
\frac{\partial_{z_j}(a(z)^{-1})_{m\ell}}{r-\kappa_{n}(r)}\,
g_{a(z)(r-\kappa_{n}(r))}(y)\,
p^n(t_{k + 1}, w, \kappa_n(r), z)
\\&=0
\end{align*}
and by the assumptions on the diffusion coefficient $\sigma$, we have
\begin{align*}
|\nabla_{z}\langle a(z)^{-1}y, y \rangle|
\lesssim
|y|^{2}.
\end{align*}
Hence, by using the uniformly local H\"older continuity of $f$, we have
\begin{align}\label{H_1_2}
|h_{1,2}(w)|
&=
\Big|\int_{\bR^d} \rmd z
\int_{|y| \leq 1} \rmd y
\int_{0}^{1} \rmd \theta\,
\{f(z + \theta y)-f(z)\}
\notag
\\&\hspace{3cm}
\times
\frac{\langle y,\nabla_{z}\langle a(z)^{-1}y, y \rangle \rangle}{r-\kappa_{n}(r)}\,
g_{a(z)(r-\kappa_{n}(r))}(y)\,
p^n(t_{k + 1}, w, \kappa_n(r), z)
\Big|
\notag
\\&\lesssim
\frac{[f]_{\alpha}}{r-\kappa_{n}(r)}
\int_{\bR^d} \rmd z
\int_{|y| \leq 1} \rmd y\,
|y|^{3+\alpha}
g_{c(r-\kappa_{n}(r))}(y)\,
p^n(t_{k + 1}, w, \kappa_n(r), z)
\notag
\\&\lesssim
[f]_{\alpha}
n^{-\frac{1+\alpha}{2}}.
\end{align}
Therefore, by \eqref{H_1_1} and \eqref{H_1_2}, we have
\begin{equation}\label{est_H_1}
\sup_{w \in \bR^{d}}|h_1(w)|
\lesssim
[f]_{\alpha}
n^{-\frac{1 + \alpha}{2}}.
\end{equation}
Next, we estimate $h_{2}$.
Note that
\begin{align*}
\int_{\bR^d} \rmd z
\int_{|y| \leq 1} \rmd y
\int_{0}^{1} \rmd \theta\,
f(z)\,
g_{a(z)(r-\kappa_{n}(r))}(y)\,
\big\langle
y,
(\nabla_{z} p^n)(t_{k + 1}, w, \kappa_n(r), z)
\big\rangle
=
0
\end{align*}
and by \cite[page 278, line 9]{KoMa02}, there exists constants $C,c>0$ such that
\begin{equation*}
|(\nabla_{z} p^n)(t_{k + 1}, w, \kappa_n(r), z)|
\leq
\frac{C}{(\kappa_n(r) - t_{k + 1})^{1/2}}
g_{c(\kappa_n(r) - t_{k + 1})}(z - w).
\end{equation*}
Hence by using the uniformly local H\"older continuity of $f$, we have
\begin{align}\label{est_H_2}
|h_{2}(w)|
&=
\Big|
\int_{\bR^d} \rmd z
\int_{|y| \leq 1} \rmd y
\int_{0}^{1} \rmd \theta\,
\{f(z + \theta y)-f(z)\}\,
g_{a(z)(r-\kappa_{n}(r))}(y)\,
\big\langle
y,
(\nabla_{z} p^n)(t_{k + 1}, w, \kappa_n(r), z)
\big\rangle
\Big|
\notag
\\&\lesssim
\frac{[f]_{\alpha}}{(\kappa_n(r) - t_{k + 1})^{1/2}}
\int_{\bR^d} \rmd z
\int_{|y| \leq 1} \rmd y
\int_{0}^{1} \rmd \theta\,
|y|^{1+\alpha}\,
g_{a(z)(r-\kappa_{n}(r))}(y)\,
g_{c(\kappa_n(r) - t_{k + 1})}(z - w).
\notag
\\&\lesssim
\frac{[f]_{\alpha}n^{-\frac{1+\alpha}{2}}}{(\kappa_n(r) - t_{k + 1})^{1/2}}.
\end{align}
Therefore, combining \eqref{h_w}, \eqref{est_H_1} and \eqref{est_H_2}, we obtain
\begin{align*}
\sup_{w \in \bR^d}|h(w)|
\lesssim
[f]_{\alpha}
\Big\{
1 + \frac{1}{(\kappa_n(r) - t_{k + 1})^{1/2}}
\Big\}
n^{-\frac{1 + \alpha}{2}}.
\end{align*}
This implies that
\begin{align}\label{J_2}
&\int_{t_{k+2}}^{t}
\Big\|
\bE^{s}\Big[
\Big\{f(Y_r^n) - f(Y_{\kappa_n(r)}^n)\Big\}
1_{\{|Y_r^n - Y_{\kappa_n(r)}^n| \leq 1\}}
\Big]
\Big\|_{L^{p}}
\rd r
\notag
\\&\lesssim
[f]_{\alpha}
n^{-\frac{1 + \alpha}{2}}
\int_{t_{k+2}}^{t}
\Big\{
1+\frac{1}{(\kappa_n(r) - t_{k + 1})^{1/2}}
\Big\}
\rd r
\notag
\\&\leq
[f]_{\alpha}
n^{-\frac{1 + \alpha}{2}}
\int_{t_{k+2}}^{t}
\Big\{
1+\frac{1}{(r - t_{k + 2})^{1/2}}
\Big\}
\rd r
=
[f]_{\alpha}
n^{-\frac{1 + \alpha}{2}}
\Big\{
(t-t_{k+2})
+
2(t- t_{k +2})^{1/2}
\Big\}
\notag
\\&\lesssim
[f]_{\alpha}
n^{-\frac{1 + \alpha}{2}}
(t-t_{k+2})^{1/2}
\lesssim
[f]_{\alpha}
n^{-\frac{1 + \alpha}{2}+\varepsilon}
|t-s|^{1/2+\varepsilon}.
\end{align}
Therefore, by \eqref{A_st_1}, \eqref{J_1} and \eqref{J_2}, we obtain for $t > t_{k + 3}$,
\begin{align*}
\|A_{s,t}\|_{L^{p}}
&\lesssim
([f]_{\alpha} \vee |f(0)|)
\|g\|_\infty
\Big\{
n^{-\frac{1 + \alpha}{2}+\varepsilon}
+
\exp\Big(-\frac{n}{8 p \lambda T}\Big)
\Big\}
|t-s|^{1/2+\varepsilon}
\notag
\\&\lesssim
([f]_{\alpha} \vee |f(0)|)
\|g\|_\infty
n^{-\frac{1 + \alpha}{2}+\varepsilon}
|t-s|^{1/2+\varepsilon}.
\end{align*}
For $t \leq t_{k + 3}$, by a similar argument as \eqref{J_1} yields the same bound as the above.
Therefore, the condition \eqref{sew_1} holds with $\varepsilon_1 = \varepsilon$ and $\Gamma_{1} =C([f]_{\alpha} \vee |f(0)|)\|g\|_\infty  n^{-\frac{1 + \alpha}{2} + \varepsilon}$ for some constant $C> 0$.

Next we prove that $(A_{s, t})_{0 \leq s \leq  t \leq T}$ satisfies the condition \eqref{sew_2}.
By the definition of $(A_{s, t})_{0 \leq s \leq  t \leq T}$, we have for any $0 \leq s \leq u \leq t \leq T$,
\begin{align*}
\bE^s[A_{s,t}-A_{s,u}-A_{u,t}]
&=
\int_u^t
\bE^{s}\Big[
\Big\{g(Y_s^n)-g(Y_u^n)\Big\}
\bE^{u}
\Big[
f(Y_r^n)-f(Y_{\kappa_n(r)}^n)
\Big]
\Big]
\rd r.
\end{align*}
Hence by using H\"older's inequality, Jensen's inequality, Lemma \ref{lem_dif_em}, Lemma \ref{lem_tail}, \eqref{J_1} and \eqref{J_2}, we have
\begin{align*}
\|\bE^s[A_{s,t}-A_{s,u}-A_{u,t}]\|_{L^{p}}
&\leq
\int_u^t
\Big\|
\bE^{s}\Big[
\Big\{g(Y_s^n)-g(Y_u^n)\Big\}
\bE^{u}
\Big[
f(Y_r^n)-f(Y_{\kappa_n(r)}^n)
\Big]
\Big]
\Big\|_{L^{p}}
\rd r
\notag
\\&\leq
\|
g(Y_s^n)-g(Y_u^n)
\|_{L^{2p}}
\int_u^t
\Big\|
\bE^{u}
\Big[
f(Y_r^n)-f(Y_{\kappa_n(r)}^n)
\Big]
\Big\|_{L^{2p}}
\rd r
\notag
\\&\leq
[g]_{\mathrm{Lip}}
\|
Y_s^n-Y_u^n
\|_{L^{2p}}
\int_u^t
\Big\|
\bE^{u}
\Big[
f(Y_r^n)-f(Y_{\kappa_n(r)}^n)
\Big]
\Big\|_{L^{2p}}
\rd r
\notag
\\&\lesssim
([f]_{\alpha} \vee|f(0)|)
[g]_{\mathrm{Lip}}
n^{-\frac{1 + \alpha}{2} + \varepsilon}|t - s|^{1 + \varepsilon}.
\end{align*}
Therefore, the condition \eqref{sew_2} holds with $\varepsilon_{2} = \varepsilon$ and $\Gamma_2 = C([f]_{\alpha} \vee|f(0)|)[g]_{\mathrm{Lip}} n^{-\frac{1 + \alpha}{2} + \varepsilon}$ for some constant $C > 0$.

Therefore, from the stochastic sewing lemma (Lemma \ref{lem_stochastic_sewing}), there exist a unique stochastic process $(\cA_t)_{t \in [0,T]}$ satisfying the conditions (i), (ii) in Lemma \ref{lem_stochastic_sewing}, and in particular \eqref{sew_5} holds.
We define a stochastic process $(\widetilde{\cA}_{t})_{t \in [0,T]}$ by
\begin{align*}
\widetilde{\cA}_{t}
:=
\int_0^t
\Big\{f(Y_r^n) - f(Y_{\kappa_n(r)}^n)\Big\}
g(Y_r^n)
\rd r,~t \in [0,T].
\end{align*}
Then we prove $\cA_{t}=\widetilde{\cA}_{t}$. a.s. for $t \in [0,T]$.
By the definition, it is trivial that $\widetilde{\cA}$ satisfies (i) in Lemma \ref{lem_stochastic_sewing}.
Next we prove that $\widetilde{\cA}$ satisfies the conditions \eqref{sew_3} and \eqref{sew_4} with some constants $\Gamma_{3},\Gamma_{4}\geq 0$.
By Lemma \ref{lem_p_em}, for any $0 \leq s \leq t \leq T$, we have
\begin{align*}
&\|\widetilde{\cA}_t - \widetilde{\cA}_s - A_{s, t}\|_{L^p}
\leq
\|\widetilde{\cA}_t - \widetilde{\cA}_s\|_{L^p}
+
\|A_{s, t}\|_{L^p}
\\&\leq
\Big\|\int_s^t \Big\{f(Y_r^n) - f(Y_{\kappa_n(r)}^n)\Big\}g(Y_r^n) \rd r\Big\|_{L^p}
+
\Big\|\bE\Big[\int_s^t \Big\{f(Y_r^n) - f(Y_{\kappa_n(r)}^n)\Big\}g(Y_{s}^n) \rd r\Big]\Big\|_{L^p}
\\&\lesssim
|t - s|
=
|t - s|^{1/2 - \varepsilon}|t - s|^{1/2 + \varepsilon}
\leq
T^{1/2 - \varepsilon}|t - s|^{1/2 + \varepsilon}.
\end{align*}
Hence, the condition \eqref{sew_3} holds with some $\Gamma_{3}>0$ and any $\Gamma_4\geq 0$.
Since $f$ is of linear growth, by using H\"older's inequality and Lemma \ref{lem_dif_em}, we have
\begin{align*}
\|\bE^{s}[\widetilde{\cA}_t - \widetilde{\cA}_s - A_{s, t}]\|_{L^p}
&\leq
\int_{s}^{t}
\Big\|
\Big\{
g(Y_r^n) - g(Y_{s}^n)
\Big\}
\Big\{
f(Y_r^n) - f(Y_{\kappa_n(r)}^n)
\Big\}
\Big\|_{L^{p}}
\rd r
\\&\leq
\int_{s}^{t}
\Big\|
g(Y_r^n) - g(Y_{s}^n)
\Big\|_{L^{2p}}
\Big\|
f(Y_r^n) - f(Y_{\kappa_n(r)}^n)
\Big\|_{L^{2p}}
\rd r
\\&\lesssim
\int_{s}^{t}
\|Y_r^n-Y_{s}^n\|_{L^{2p}}
\rd r
\lesssim
\int_{s}^{t}
(r-s)^{1/2}
\rd r
\lesssim
(t-s)^{1+\varepsilon}.
\end{align*}
Hence, the condition \eqref{sew_4} holds with some constant $\Gamma_{4}>0$.
Therefore, by the uniqueness of the stochastic process $\cA$, we have $\cA_{t} = \widetilde{\cA}_{t}$, a.s., $t \in [0,T]$.
In particular, since $\cA$ satisfies the condition \eqref{sew_5}, we obtain
\begin{align*}
\sup_{t \in [0,T]}
\Big\|
\int_{0}^{t}
\Big\{f(Y_{r}^{n})-f(Y_{\kappa_n(r)}^{n})\Big\}
g(Y_{r}^{n})
\rd r
\Big\|_{L^{p}}
&\lesssim
([f]_{\alpha} \vee |f(0)|)
(\|g\|_\infty+[g]_{\mathrm{Lip}})
n^{-\frac{1 + \alpha}{2} + \varepsilon}.
\end{align*}
This concludes the proof.
\end{proof}

Finally, we extend the above proposition to the Euler--Maruyama scheme $X^{n}$.

\begin{Prop}\label{lem_quadratic_em}
Let $f \in C^{\alpha}(\bR^{d};\bR)$ with $\alpha \in (0,1]$ and $g:\bR^{d} \to \bR$ be a bounded and global Lipschitz continuous function.
We suppose that Assumption \ref{asm_01} holds.
Then for any $p \geq 1$ and $\varepsilon \in (0,(1+\alpha))/2)$, there exists a constant $C_{p,\varepsilon} > 0$ such that for any $t \in [0, T]$ and $n \in \bN$,
\begin{align*}
\sup_{t \in [0,T]}
\Big\|
\int_{0}^{t}
\Big\{f(X_{r}^{n})-f(X_{\kappa_n(r)}^{n})\Big\}
g(X_{r}^{n})
\rd r
\Big\|_{L^{p}}
\leq
C_{p,\varepsilon}
([f]_{\alpha} \vee |f(0)|)(\|g\|_\infty + [g]_{\mathrm{Lip}}) n^{-\frac{1 + \alpha}{2} + \varepsilon}.
\end{align*}
\end{Prop}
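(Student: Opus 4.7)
The plan is to transfer the estimate from $Y^n$ to $X^n$ via a Girsanov change of measure, exploiting the fact that $X^n$ differs from $Y^n$ only by the addition of the drift term $b$, which is of sub-linear growth.

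More precisely, I would set
$$\theta_t := \sigma^{-1}(X_{\kappa_n(t)}^n)\,b(X_{\kappa_n(t)}^n), \qquad t \in [0,T],$$
which is well defined since $\sigma$ is uniformly elliptic (Assumption \ref{asm_01}(iv)), and then define a new probability measure $\bQ$ on $(\Omega, \cF_T)$ by
$$\frac{\rd\bQ}{\rd\bP} = \cE(-\theta \cdot B)_T := \exp\Big(-\int_0^T \langle \theta_s, \rd B_s\rangle - \frac{1}{2}\int_0^T |\theta_s|^2 \rd s\Big).$$
If the Doléans exponential $\cE(-\theta \cdot B)$ is a true martingale, then Girsanov's theorem gives that $\widetilde{B}_t := B_t + \int_0^t \theta_s \rd s$ is a $\bQ$-Brownian motion. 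Since
$$\rd X_t^n = b(X_{\kappa_n(t)}^n)\rd t + \sigma(X_{\kappa_n(t)}^n)\rd B_t = \sigma(X_{\kappa_n(t)}^n)\rd \widetilde{B}_t,$$
the process $X^n$ under $\bQ$ is a driftless Euler--Maruyama scheme with diffusion $\sigma$, starting at $x_0$; in particular it has the same law as $Y^n$ under $\bP$. So Proposition \ref{lem_Lp} applied under $\bQ$ with exponent $2p$ gives, for every $t \in [0,T]$,
$$\Big\|\int_0^t \{f(X_r^n)-f(X_{\kappa_n(r)}^n)\}\,g(X_r^n)\,\rd r\Big\|_{L^{2p}(\bQ)} \leq C_{p,\varepsilon}\,([f]_\alpha\vee|f(0)|)(\|g\|_\infty+[g]_{\mathrm{Lip}})\,n^{-\frac{1+\alpha}{2}+\varepsilon}.$$
Then I would apply Hölder's inequality to pass back to $\bP$: writing $I^n_t$ for the above integral,
$$\bE^\bP[|I^n_t|^p] = \bE^\bQ\Big[|I^n_t|^p\,\frac{\rd\bP}{\rd\bQ}\Big] \leq \bE^\bQ[|I^n_t|^{2p}]^{1/2}\,\bE^\bQ\Big[\Big(\frac{\rd\bP}{\rd\bQ}\Big)^2\Big]^{1/2},$$
so the whole estimate reduces to bounding $\bE^\bQ[(\rd\bP/\rd\bQ)^2]$ by a constant independent of $n$.

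The main obstacle is that $b$ is unbounded, so Novikov's condition must be verified carefully. Since $\sigma^{-1}$ is bounded and $b$ is of sub-linear growth, for every $\delta>0$ there exists $L_\delta>0$ such that $|\theta_s|^2 \leq 2\|\sigma^{-1}\|_\infty^2(L_\delta^2 + \delta^2 |X_{\kappa_n(s)}^n|^2)$. Under $\bQ$, $X^n$ is a driftless Euler--Maruyama scheme, so $\sup_{t\in[0,T]}|X^n_t|$ is controlled by a stochastic integral against the $\bQ$-Brownian motion $\widetilde{B}$ whose quadratic variation is uniformly bounded (by $\|\sigma\|_\infty^2 T$). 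The Burkholder--Davis--Gundy inequality with sharp constant (of order $\sqrt{q}$ as $q\to\infty$, see \cite{BaYo82,Re08}) therefore yields Fernique-type Gaussian exponential moments
$$\bE^\bQ\Big[\exp\Big(\eta\sup_{t\in[0,T]}|X^n_t|^2\Big)\Big] < \infty \quad \text{for sufficiently small } \eta > 0,$$
uniformly in $n$. Choosing $\delta$ small enough so that $\eta := 2\|\sigma^{-1}\|_\infty^2\delta^2 T$ falls in the admissible range, I obtain $\bE^\bQ[\exp(c\int_0^T|\theta_s|^2\rd s)] < \infty$ for some $c>0$ that is as large as I wish by taking $\delta$ small (up to inflating $L_\delta$). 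This verifies the Novikov condition for $\cE(-\theta\cdot B)$ and, after writing
$$\Big(\frac{\rd\bP}{\rd\bQ}\Big)^2 = \cE(2\theta\cdot\widetilde{B})_T\,\exp\Big(\int_0^T|\theta_s|^2\rd s\Big),$$
a Cauchy--Schwarz split of the right-hand side reduces $\bE^\bQ[(\rd\bP/\rd\bQ)^2]$ to an exponential moment of $\int_0^T|\theta_s|^2\rd s$ of the same type, handled again by the sharp BDG/Fernique bound. The resulting constant is independent of $n$, and combining with the $\bQ$-estimate from Proposition \ref{lem_Lp} completes the proof.
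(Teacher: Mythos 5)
Your high-level plan — transfer the bound from the driftless scheme to $X^n$ via a Girsanov change of measure and then control the Radon--Nikodym density using the sub-linear growth of $b$ together with sharp BDG constants — is exactly the strategy the paper uses (Proposition \ref{lem_Lp} plus Lemma \ref{lem_Girsanov}). However, you run Girsanov in the opposite direction, from $X^n$ to a driftless process, and this creates a genuine circularity in the verification of Novikov's condition. To define $\bQ$ as a probability measure you need $\cE(-\theta\cdot B)$ to be a true $\bP$-martingale, for which Novikov asks for $\bE^{\bP}\bigl[\exp\bigl(\tfrac12\int_0^T|\theta_s|^2\rd s\bigr)\bigr]<\infty$. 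But the exponential-moment estimate you propose is derived ``under $\bQ$'', using that ``under $\bQ$, $X^n$ is a driftless Euler--Maruyama scheme.'' This presupposes that $\bQ$ exists — precisely what Novikov is supposed to deliver — and in any case a $\bQ$-bound does not directly yield the required $\bP$-bound, since passing between the two measures requires the very density whose integrability is in question.

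The paper avoids this by going in the other direction: it starts from $Y^n$, whose law under $\bP$ is that of a driftless Euler scheme, and defines the density $Z(q,n)$ in terms of $\theta(Y^n_{\kappa_n(\cdot)})$. Then the Novikov verification (Lemma \ref{lem_Girsanov}) only requires exponential moments of $\sup_t|Y^n_t|$ \emph{under $\bP$}, where the BDG/Fernique argument applies without circularity, and the Maruyama--Girsanov identity gives $\bE^\bP[\phi(X^n_{\cdot\wedge t})]=\bE^\bP[\phi(Y^n_{\cdot\wedge t})Z_t(1,n)]$ directly, after which H\"older and Proposition \ref{lem_Lp} finish the proof. If you insist on removing the drift from $X^n$, you would need either a direct verification of Novikov under $\bP$ for $\theta(X^n_{\kappa_n(\cdot)})$ — which requires a Gronwall-type control of $\sup_t|X^n_t|$ that accounts for the drift before any change of measure — or a genuine localization argument (stop at $\tau_R=\inf\{t:|X^n_t|\geq R\}$, where Novikov is trivial, obtain $R$-uniform bounds under $\bQ_R$, then pass to the limit). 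As written, neither is present, so the proof has a gap at the step ``This verifies the Novikov condition.''
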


In order to prove this proposition, we use Maruyama--Girsanov transformation.
For $q \in \bR$ and $n \in \bN$, we define a stochastic process $Z(q,n)=(Z_{t}(q,n))_{t \in [0,T]}$ by
\begin{align*}
Z_{t}(q,n)
&:=
\exp\Big(
q
\int_{0}^{t}
\langle
\sigma(Y^{n}_{\kappa_{n}(s)})^{-1}
b(Y^{n}_{\kappa_{n}(s)}),
\rd B_{s}
\rangle
-
\frac{q^{2}}{2}
\int_{0}^{t}
|\sigma(Y^{n}_{\kappa_{n}(s)})^{-1}
b(Y^{n}_{\kappa_{n}(s)})|^2
\rd s
\Big).
\end{align*}
Then we need the following moment estimate for $Z(q,n)$.

\begin{Lem}\label{lem_Girsanov}
We suppose that Assumption \ref{asm_01} (ii) and (iv) hold.
Then for any $p>0$,
\begin{align*}
\sup_{n \in \bN}
\bE\Big[
\exp\Big(
p
\int_{0}^{T}
|\sigma(Y^{n}_{\kappa_{n}(s)})^{-1}
b(Y^{n}_{\kappa_{n}(s)})|^2
\rd s
\Big)
\Big]
<\infty.
\end{align*}
In particular, for any $q \in \bR$ and $n \in \bN$, the stochastic process $Z(q,n)=(Z_{t}(q,n))_{t \in [0,T]}$ is a martingale and for any $p>0$,
\begin{align*}
\sup_{n \in \bN}
\sup_{t \in [0,T]}
\bE[Z_{t}(q,n)^{p}]
<\infty.
\end{align*}
\end{Lem}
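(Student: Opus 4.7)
\textbf{Plan for Lemma \ref{lem_Girsanov}.}
My strategy is to first establish the uniform exponential moment bound and then read off the martingale property and $L^{p}$-moment bounds of $Z(q,n)$ as consequences. The uniform ellipticity in Assumption \ref{asm_01} (iv) gives $|\sigma(x)^{-1}v|^{2}=\langle a(x)^{-1}v,v\rangle\le \lambda|v|^{2}$, and the sub-linear growth in Assumption \ref{asm_01} (ii) gives, for every $\eta>0$, a constant $L_{\eta}>0$ with $|b(x)|\le \eta|x|+L_{\eta}$. Writing $\Xi_{n}:=\int_{0}^{T}|\sigma(Y^{n}_{\kappa_{n}(s)})^{-1}b(Y^{n}_{\kappa_{n}(s)})|^{2}\rd s$, the combination yields
\begin{align*}
\Xi_{n}\le 2\lambda\eta^{2}T\sup_{t\in[0,T]}|Y^{n}_{t}|^{2}+2\lambda L_{\eta}^{2}T,
\end{align*}
so that obtaining $\sup_{n}\bE[\exp(p\Xi_{n})]<\infty$ reduces to a Gaussian-type tail estimate for $\sup_{t\in[0,T]}|Y^{n}_{t}|$, with a pre-factor $2p\lambda\eta^{2}T$ that can be made arbitrarily small by shrinking $\eta$.

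Since $Y^{n}$ has zero drift and bounded diffusion, each component $Y^{n,i}-x_{0,i}$ is a continuous martingale whose quadratic variation is bounded by $\lambda T$ (again by Assumption \ref{asm_01} (iv)). The Bernstein (exponential martingale) inequality then yields $\bP(\sup_{t\le T}|Y^{n,i}_{t}-x_{0,i}|\ge R)\le 2 e^{-R^{2}/(2\lambda T)}$ uniformly in $n$, and combining the $d$ components via H\"older's inequality produces a threshold $c_{0}=c_{0}(d,\lambda,T)>0$ such that $\sup_{n}\bE[\exp(c\sup_{t\le T}|Y^{n}_{t}|^{2})]<\infty$ for every $c\in(0,c_{0})$. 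Given $p>0$, one now chooses $\eta$ so small that $2p\lambda\eta^{2}T<c_{0}$, proving the first assertion of the lemma.

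For the second assertion, the first assertion applied with $p=q^{2}/2$ verifies Novikov's criterion, so $Z(q,n)$ is a true martingale. For the $L^{p}$-moment bound, set $N^{n}_{t}:=\int_{0}^{t}\langle\sigma(Y^{n}_{\kappa_{n}(s)})^{-1}b(Y^{n}_{\kappa_{n}(s)}),\rd B_{s}\rangle$ so that $\langle N^{n}\rangle_{T}=\Xi_{n}$, and verify by direct expansion the identity
\begin{align*}
Z_{t}(q,n)^{p}=\cE(2pqN^{n})_{t}^{1/2}\cdot\exp\Big(\tfrac{pq^{2}(2p-1)}{2}\langle N^{n}\rangle_{t}\Big).
\end{align*}
Since the first assertion applied with $p=2p^{2}q^{2}$ also makes $\cE(2pqN^{n})$ a true martingale, Cauchy--Schwarz yields
\begin{align*}
\bE[Z_{t}(q,n)^{p}]\le \bE[\cE(2pqN^{n})_{t}]^{1/2}\bE[\exp(pq^{2}(2p-1)\langle N^{n}\rangle_{t})]^{1/2}= \bE[\exp(pq^{2}(2p-1)\Xi_{n})]^{1/2},
\end{align*}
which is uniformly bounded in $n$ and $t\in[0,T]$ by the first assertion (the right-hand side is trivially $\le 1$ if $p\le 1/2$, since then the exponent is nonpositive).

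The main obstacle is extracting the uniform-in-$n$ Gaussian tail bound for $\sup_{t\le T}|Y^{n}_{t}|$: although this is standard for continuous martingales with bounded quadratic variation, one has to confirm that the Bernstein constants depend only on the quadratic-variation bound $\lambda T$ and not on $n$, so that the sub-linear pre-factor $\eta$ remains the only free parameter and can be chosen small enough to accommodate any prescribed $p>0$.
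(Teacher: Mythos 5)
Your proof is correct and follows the same overall strategy as the paper: use sub-linear growth to introduce an arbitrarily small multiplicative constant $\eta$ (the paper's $\delta$) in front of $\sup_{t\le T}|Y^n_t|^2$, reduce to a uniform-in-$n$ exponential square moment for $\sup_{t\le T}|Y^n_t|$, then shrink $\eta$ to fit any prescribed $p$; the second assertion via Novikov and the algebraic splitting $Z_t(q,n)^p=\cE(2pqN^n)_t^{1/2}\exp\!\big(\tfrac{(2p^2-p)q^2}{2}\langle N^n\rangle_t\big)$ with Cauchy--Schwarz is exactly what the paper does. The one place where you depart is the key technical step: the paper bounds $\bE[\exp(c\sup_t|M^n_t|^2)]$ by combining the Burkholder--Davis--Gundy inequality with the sharp constant $A_q=K_dq^{q/2}$, a Taylor expansion of the exponential, and Stirling's formula to sum the resulting series; you instead apply Bernstein's exponential-martingale inequality to each component $Y^{n,i}-x_{0,i}$, using the deterministic quadratic-variation bound $\langle Y^{n,i}\rangle_T\le\lambda T$ from Assumption \ref{asm_01} (iv), to get a Gaussian tail with constants independent of $n$, and then integrate the tail. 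Both give a threshold constant depending only on $d,\lambda,T$, and both rely on the same underlying fact (deterministic bound on the bracket of the driftless Euler scheme). Your Bernstein route is arguably more direct, avoiding the sharp-constant bookkeeping and the Stirling computation; the paper's route is self-contained modulo the cited sharp BDG constant. The worry you flag at the end (that the Bernstein constants might secretly depend on $n$) is unfounded: the exponential-martingale inequality for continuous local martingales requires only an almost-sure bound on the quadratic variation at time $T$, which here is $\lambda T$ independent of $n$, so the constants are uniform.
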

\begin{proof}
We set $M_{t}^{n}:=\int_0^{t} \sigma(Y_{\kappa_n(r)}^n)\rd B_r$, $t \in[0,T]$.
By the definition of the Euler--Maruyama scheme $Y^n$, we have for any $k=0,\ldots,n$,
\begin{align*}
|Y_{t_k}^n|^2
&\leq
2|x_0|^2
+
2\sup_{t \in [0,T]}|M_{t}^{n}|^2.
\end{align*}
Since $b$ is of sub-linear growth, for any $\delta > 0$, there exists $L_\delta > 0$ such that for any $x \in \bR^d$
\begin{equation*}
|b(x)| \le \delta|x| + L_\delta.
\end{equation*}
Hence we have
\begin{align*}
\int_0^T
|\sigma(Y_{\kappa_n(r)}^n)^{-1}
b(Y_{\kappa_n(r)}^n)|^2
\rd r
&\lesssim
\int_0^T
\Big\{
\delta^{2}|Y_{\kappa_n(r)}^n|^{2}+L_\delta^{2}
\Big\}
\rd r
\lesssim
(L_{\delta}^{2}+\delta^{2})
+
\delta^{2}
\sup_{t \in [0,T]}|M_{t}^{n}|^2.
\end{align*}
Now we fix $p \geq 2$.
Then there exists $C_{p}>0$ such that,
\begin{align*}
\bE\Big[
\exp\Big(
p
\int_{0}^{T}
|\sigma(Y^{n}_{\kappa_{n}(s)})^{-1}
b(Y^{n}_{\kappa_{n}(s)})|^2
\rd s
\Big)
\Big]
&\leq
\exp(C_{p}(L_{\delta}^{2}+\delta^{2}))
\bE\Big[
\exp\Big(
C_{p}\delta^{2}
\sup_{t \in [0,T]}|M_{t}^{n}|^{2}
\Big)
\Big].
\end{align*}
By using Burkholder--Davis--Gundy's inequality (see \cite{BaYo82, Re08}) with a sharp constant
\begin{align*}
A_{q}:=K_{d} q^{q/2},~q \geq 2,
\end{align*}
for some constant $K_{d}>0$ depending only on the dimension $d$, we have for any $q \geq 2$
\begin{align*}
\bE\Big[
\sup_{t \in [0,T]}|M_{t}^{n}|^{q}
\Big]
\leq
A_{q}
\bE\Big[\Big(
\int_0^T
|\sigma(Y_{\kappa_n(r)}^n)|^2 \rd r
\Big)^{q/2}\Big]
\leq
A_{q}
\|\sigma\|_{\infty}^{q} T^{q/2}.
\end{align*}
Then by using Taylor expansion for the exponential function and the monotone convergence theorem, we have
\begin{align*}
\bE\Big[
\exp\Big(
C_{p}\delta^{2}
\sup_{t \in [0,T]}|M_{t}^{n}|^{2}
\Big)
\Big]
&=
\sum_{k=0}^{\infty}
\frac{C_{p}^{k}\delta^{2k}}{k!}
\bE\Big[
\sup_{t \in [0,T]}|M_{t}^{n}|^{2k}
\Big]
\leq
1
+
K_{d}
\sum_{k=1}^{\infty}
\frac{(2C_{p}\|\sigma\|_{\infty}^{2} Tk\delta^{2})^{k}}{k!}.
\end{align*}
By Stirling's formula $\sqrt{2\pi}k^{k+1/2}e^{-k}\leq k!$ for any $k \in \bN$, we have
\begin{align*}
\sum_{k=1}^{\infty}
\frac{(2C_{p}\|\sigma\|_{\infty}^{2} Tk)^{k}\delta^{2k}}{k!}
&\leq
\frac{1}{\sqrt{2\pi}}
\sum_{k=1}^{\infty}
\frac{(2C_{p}\|\sigma\|_{\infty}^{2} Te\delta^{2})^{k}}{k^{1/2}}.
\end{align*}
Therefore, by choosing $\delta$ as
$
\delta
=
2^{-1}(C_{p}\|\sigma\|_{\infty}^{2} Te)^{-1/2},
$
we have
\begin{align}\label{eq_sum_0}
\sum_{k=1}^{\infty}
\frac{(2C_{p}\|\sigma\|_{\infty}^{2} Tk\delta^{2})^{k}}{k!}
<\infty.
\end{align}
This implies that
\begin{align*}
\sup_{n \in \bN}
\bE\Big[
\exp\Big(
p
\int_{0}^{T}
|\sigma(Y^{n}_{\kappa_{n}(s)})^{-1}
b(Y^{n}_{\kappa_{n}(s)})|^2
\rd s
\Big)
\Big]
<\infty.
\end{align*}
In particular, by Novikov condition, for $q \in \bR$ and $n \in \bN$, $Z(q,n)=(Z_{t}(q,n))_{t \in [0,T]}$ is a martingale.

By using H\"older's inequality, we have
\begin{align*}
\bE[Z_{t}(q,n)^{p}]
&=
\bE\Big[
\exp\Big(
pq
\int_{0}^{t}
\langle
\sigma(Y^{n}_{\kappa_{n}(s)})^{-1}
b(Y^{n}_{\kappa_{n}(s)}),
\rd B_{s}
\rangle
-
\frac{pq^{2}}{2}
\int_{0}^{t}
|\sigma(Y^{n}_{\kappa_{n}(s)})^{-1}
b(Y^{n}_{\kappa_{n}(s)})|^2
\rd s
\Big)
\Big]
\\&=
\bE\Big[
Z_{t}(2pq,n)^{1/2}
\exp\Big(
\frac{(2p^{2}-p)q^{2}}{2}
\int_{0}^{t}
|\sigma(Y^{n}_{\kappa_{n}(s)})^{-1}
b(Y^{n}_{\kappa_{n}(s)})|^2
\rd s
\Big)
\Big]
\\&\leq
\bE\Big[
Z_{t}(2pq,n)
\Big]^{1/2}
\bE\Big[
\exp\Big(
(2p^{2}-p)q^{2}
\int_{0}^{t}
|\sigma(Y^{n}_{\kappa_{n}(s)})^{-1}
b(Y^{n}_{\kappa_{n}(s)})|^2
\rd s
\Big)
\Big]^{1/2}.
\end{align*}
Since $Z(2pq,n)$ is a martingale, we have $\bE[Z_{t}(2pq,n)]=1$.
Hence we obtain
\begin{align*}
\sup_{n \in \bN}
\sup_{t \in [0,T]}\bE[Z_{t}(q,n)^{p}]<\infty.
\end{align*}
This concludes the proof.
\end{proof}

\begin{Rem}\label{Rem_linear}
If we replace Assumption \ref{asm_01} (ii) by the linear growth condition: there exists $L>0$, such that $|b(x)|\leq L(1+|x|)$, $x \in \bR^{d}$, then we can prove the statement in Theorem \ref{main_thm00} for sufficiently small $T>0$.
Indeed, if $T$ is sufficiently small, then the series \eqref{eq_sum_0} convergences with $\delta=L$.
Babi, Dieye and Menoukeu Pamen \cite{BaDiPa23} use this argument in their proof.

\end{Rem}

Now we are ready to prove Proposition \ref{lem_quadratic_em}.

\begin{proof}[Proof of Proposition \ref{lem_quadratic_em}]
By H\"older's inequality, it suffices to prove the bound for $p \geq 2$.
Let $t \in [0,T]$ be fixed.
Then by Lemma \ref{lem_Girsanov}, Maruyama--Girsanov theorem and H\"older's inequality, we have
\begin{align*}
&\bE\Big[
\Big|
\int_{0}^{t}
\Big\{f(X_{r}^{n})-f(X_{\kappa_n(r)}^{n})\Big\}
g(X_{r}^{n})
\rd r
\Big|^{p}
\Big]
=
\bE\Big[
\Big|
\int_{0}^{t}
\Big\{f(Y_{r}^{n})-f(Y_{\kappa_n(r)}^{n})\Big\}
g(Y_{r}^{n})
\rd r
\Big|^{p}
Z_{t}(1,n)
\Big]
\\&\leq
\bE\Big[
\Big|
\int_{0}^{t}
\Big\{f(Y_{r}^{n})-f(Y_{\kappa_n(r)}^{n})\Big\}
g(Y_{r}^{n})
\rd r
\Big|^{2p}
\Big]^{1/2}
\bE[Z_{t}(1,n)^{2}]^{1/2}.
\end{align*}
Therefore, from Proposition \ref{lem_Lp} and Lemma \ref{lem_Girsanov}, we have
\begin{align*}
\sup_{t \in [0,T]}
\Big\|
\int_{0}^{t}
\Big\{f(X_{r}^{n})-f(X_{\kappa_n(r)}^{n})\Big\}
g(X_{r}^{n})
\rd r
\Big\|_{L^{p}}
\lesssim
([f]_{\alpha} \vee |f(0)|)(\|g\|_\infty + [g]_{\mathrm{Lip}}) n^{-\frac{1 + \alpha}{2} + \varepsilon}.
\end{align*}
This concludes the proof.
\end{proof}

\subsection{Proof of main results}\label{Sec_3_3}

Let $f: \bR^d \to \bR^d$ be a measurable function and $\lambda > 0$.
We consider the following elliptic equation
\begin{equation}\label{pde_01}
\lambda u -\sL u = f,
\end{equation}
where the operator $\sL$ is the infinitesimal generator of $X$, that is,
\begin{align*}
(\sL g)(x)
:=
\frac{1}{2}\mathrm{Tr}\big(a(x) (\nabla^{2}g)(x)\big)
+
\langle b(x),(\nabla g)(x) \rangle,~a(a)=\sigma(x)\sigma(x)^{\top}.
\end{align*}
The following lemma shows that if $f \in C^{\alpha}(\bR^{d};\bR^{d})$, $\alpha \in (0,1)$, then the elliptic equation \eqref{pde_01} admits a classical solution.

\begin{Lem}[Theorem 5 in \cite{FlGuPr10}]\label{lem_pde_01}
We suppose that Assumption \ref{asm_01} (i), (iii) and (iv) hold.
Then for any $\alpha^\prime \in (0, \alpha)$, there exists $\lambda_0 > 0$ such that for any $\lambda \geq \lambda_0$, for any $f \in C^{\alpha}(\bR^d;\bR^d)$, the equation \eqref{pde_01} admits a unique classical solution $u \in C^{2 + \alpha^\prime}(\bR^d; \bR^d)$ such that
\begin{equation*}
\|u\|_{2 + \alpha^\prime}
=
\|u(\cdot)(1 + |\cdot|)^{-1}\|_\infty
+
\|\nabla u\|_\infty
+
\|\nabla^2 u\|_\infty
+
[\nabla^2 u]_{\alpha^\prime}
\leq
C(\lambda)
\|f\|_{\alpha},
\end{equation*}
where the constant $C(\lambda)$ is independent of $u$ and $f$, and $C(\lambda) \to 0$ as $\lambda \to \infty$.
\end{Lem}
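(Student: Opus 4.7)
The statement is quoted directly from \cite[Theorem 5]{FlGuPr10}; my plan is to reproduce their argument, whose core is a Banach fixed-point scheme treating the low-regularity drift term $\langle b, \nabla u\rangle$ as a small perturbation of a smooth uniformly elliptic operator once $\lambda$ is sufficiently large.

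The first stage handles the drift-free equation. With $\cL_0 v := \frac{1}{2}\mathrm{Tr}(a\nabla^2 v)$ and $a = \sigma\sigma^\top \in C^3_b$ uniformly elliptic, classical interior Schauder theory applied on balls and glued together via uniform ellipticity shows that $\lambda v - \cL_0 v = g$ admits a unique classical solution $v = R_\lambda^0 g \in C^{2+\alpha'}$ for every $g \in C^\alpha(\bR^d;\bR^d)$ and every $\lambda > 0$. Writing the resolvent as $R_\lambda^0 g = \int_0^\infty e^{-\lambda t} P_t^0 g \, \mathrm{d}t$, where $(P_t^0)_{t\geq 0}$ is the Markov semigroup generated by $\cL_0$, and using parametrix-style heat-kernel bounds on $\nabla^k P_t^0$ for $k=1,2$ adapted to the weighted $C^\alpha$ framework, one obtains the quantitative resolvent bound
\begin{align*}
\|R_\lambda^0 g\|_{2+\alpha'} \leq C\lambda^{-\delta}\|g\|_\alpha
\end{align*}
for some $\delta = \delta(\alpha,\alpha') > 0$; the operator norm of $R_\lambda^0 : C^\alpha \to C^{2+\alpha'}$ therefore tends to $0$ as $\lambda \to \infty$.

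The second stage sets up the contraction. Define $\Phi_\lambda(u) := R_\lambda^0\big(f + \langle b,\nabla u\rangle\big)$ on the Banach space $C^{2+\alpha'}(\bR^d;\bR^d)$; a fixed point of $\Phi_\lambda$ is exactly a classical solution of \eqref{pde_01}. The key algebra-type bound is
\begin{align*}
\|\langle b,\nabla u\rangle\|_\alpha \leq C\|b\|_\alpha\big(\|\nabla u\|_\infty + [\nabla^2 u]_{\alpha'}\big),
\end{align*}
which follows by splitting $\langle b(x),\nabla u(x)\rangle - \langle b(y),\nabla u(y)\rangle = \langle b(x)-b(y),\nabla u(x)\rangle + \langle b(y),\nabla u(x)-\nabla u(y)\rangle$ and using the $(1+|\cdot|)^{-1}$ weight in $\|\cdot\|_\alpha$ to absorb the linear growth of $b$ against the boundedness of $\nabla u$ and $\nabla^2 u$. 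Combined with the decay from stage one, this yields $\|\Phi_\lambda(u_1) - \Phi_\lambda(u_2)\|_{2+\alpha'} \leq C\lambda^{-\delta}\|u_1 - u_2\|_{2+\alpha'}$, so $\Phi_\lambda$ is a contraction on $C^{2+\alpha'}$ once $\lambda \geq \lambda_0$. The unique fixed point is the desired $u$, and applying the same scheme to $\Phi_\lambda(0) = R_\lambda^0 f$ gives the claimed bound $\|u\|_{2+\alpha'} \leq C(\lambda)\|f\|_\alpha$ with $C(\lambda)\to 0$ as $\lambda\to\infty$.

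The main obstacle is stage one: establishing the $\lambda$-decay of the resolvent \emph{uniformly in the spatial variable} despite the data being only of linear growth. Standard semigroup/parametrix arguments are formulated for bounded data, and adapting them here requires carrying the $(1+|x|)^{-1}$ weight through the heat-kernel computations and verifying that the Gaussian decay of the transition density of $P_t^0$ absorbs the linear factor at every spatial scale. This is the technical heart of \cite[Theorem 5]{FlGuPr10}; once it is in place, the fixed-point step in stage two is essentially routine.
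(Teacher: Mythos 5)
The paper does not prove this lemma; it cites \cite[Theorem~5]{FlGuPr10} directly, so the relevant comparison is with that reference's actual argument. Your reconstruction, however, contains a genuine gap in the key "algebra" bound of stage two.

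You claim
\begin{equation*}
\|\langle b,\nabla u\rangle\|_\alpha \le C\|b\|_\alpha\big(\|\nabla u\|_\infty+[\nabla^2 u]_{\alpha'}\big),
\end{equation*}
justified by splitting the increment and letting the $(1+|\cdot|)^{-1}$ weight "absorb the linear growth of $b$." But in the norm $\|f\|_\alpha=\|(1+|\cdot|)^{-1}f\|_\infty+[f]_\alpha$ the weight appears \emph{only} in the sup-norm term, not in the uniformly local seminorm $[f]_\alpha=\sup_{0<|x-y|\le 1}|f(x)-f(y)|/|x-y|^\alpha$. Using your own decomposition, the second piece gives, for $|x-y|\le 1$,
\begin{equation*}
\frac{|\langle b(y),\nabla u(x)-\nabla u(y)\rangle|}{|x-y|^\alpha}
\;\le\;
|b(y)|\,\|\nabla^2 u\|_\infty\,|x-y|^{1-\alpha},
\end{equation*}
and since $|b(y)|$ can grow linearly in $|y|$ this ratio is unbounded over $y\in\bR^d$; so $[\langle b,\nabla u\rangle]_\alpha=\infty$ in general. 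Concretely, with $d=1$, $b(x)=x$ and $\nabla u(x)=\sin x$ (so $u\in C^{2+\alpha'}$ for any $\alpha'$), one has $\langle b,\nabla u\rangle(x)=x\sin x$ and the local Hölder ratio near $x=2\pi k+\pi/2$ grows like $k$. Consequently the map $\Phi_\lambda(u)=R_\lambda^0\big(f+\langle b,\nabla u\rangle\big)$ is not even well defined from $C^{2+\alpha'}$ into itself under the present definition of $C^\alpha$, and the contraction step collapses.

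This is precisely why the unbounded (linear-growth) drift is the hard part of \cite[Theorem~5]{FlGuPr10}: one cannot treat $\langle b,\nabla u\rangle$ as a global $C^\alpha$ perturbation of the drift-free resolvent. The correct argument works through localization of the full operator $\lambda-\sL$ (not just the drift-free part): one covers $\bR^d$ by unit balls, subtracts the local affine part of $u$ (and the value $g(x_0)$) so that the effective right-hand side on each ball is controlled by $[g]_\alpha$, $\|\nabla u\|_\infty$ and $\|\nabla^2 u\|_\infty$ rather than by the unbounded $\|g\|_{L^\infty(B(x_0,1))}$, applies interior Schauder estimates with constants that are \emph{uniform across balls} because $[b]_\alpha$ and $[a]_\alpha$ are computed uniformly locally, and then closes the resulting a priori estimate by absorbing the $\lambda$-independent terms for $\lambda$ large. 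If you want a perturbative formulation, the natural fix is to replace $C^\alpha$ by a weighted space whose seminorm is also damped by $(1+|x|)^{-1}$; but then stage one must be reproved in that weighted class, which is not "essentially routine." As written, the proposal's central contraction estimate does not hold.
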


Now we are ready to prove Theorem \ref{main_thm00}.

\begin{proof}[Proof of Theorem \ref{main_thm00}]
It suffices to prove the bound for $p \ge 2$ and $\varepsilon \in (0, \alpha/2)$.
Let $t \in [0, T]$ be fixed.
Then it holds that
\begin{align*}
\bE[|X_t - X_t^n|^p]
&\lesssim
\Big\{
I_{1}^{n}(t)
+
I_{2}^{n}(t)
+
I_{3}^{n}(t)
+
I_{4}^{n}(t)
\Big\},
\end{align*}
where $I_{1}^{n}(t)$, $I_{2}^{n}(t)$, $I_{3}^{n}(t)$ and $I_{4}^{n}(t)$ are defined by
\begin{align*}
I_{1}^{n}(t)
&:=
\bE\Big[
\Big|
\int_{0}^{t}
\Big\{
b(X_{r})
-
b(X_{r}^{n})
\Big\}
\rd r
\Big|^{p}
\Big],~
I_{2}^{n}(t)
:=
\bE\Big[
\Big|
\int_{0}^{t}
\Big\{
b(X_{r}^{n})
-
b(X_{\kappa_{n}(r)}^{n})
\Big\}
\rd r
\Big|^{p}
\Big],
\\
I_{3}^{n}(t)
&:=
\bE\Big[
\Big|
\int_{0}^{t}
\Big\{
\sigma(X_{r})
-
\sigma(X_{r}^{n})
\Big\}
\rd B_{r}
\Big|^{p}
\Big],~
I_{4}^{n}(t)
:=
\bE\Big[
\Big|
\int_{0}^{t}
\Big\{
\sigma(X_{r}^{n})
-
\sigma(X_{\kappa_{n}(r)}^{n})
\Big\}
\rd B_{r}
\Big|^{p}
\Big].
\end{align*}
For $\alpha=1$ and $\sigma$ is a constant matrix, then since $b$ is globally Lipschitz continuous, by using Proposition \ref{lem_quadratic_em} with $g=1$, we have
\begin{align*}
\bE[|X_t - X_t^n|^p]
&\lesssim
\int_{0}^{t}
\bE[|X_s - X_s^n|^p]
\rd s
+
I_{2}^{n}(t)
\lesssim
\int_{0}^{t}
\bE[|X_s - X_s^n|^p]
\rd s
+
([b]_{1} \vee |b(0)|)
n^{-p + \varepsilon p}.
\end{align*}
Therefore, by Gronwall's inequality, we obtain
\begin{align*}
\sup_{t \in [0, T]}
\bE[|X_t - X_t^n|^p]
\lesssim
n^{-p + \varepsilon p}.
\end{align*}

Now we assume $\alpha \in (0,1)$.
Let $\lambda > 0$ and let $u_{\lambda}=(u_{\lambda,1},\ldots,u_{\lambda,d})^{\top} \in C^{2 + \alpha/2}(\bR^d; \bR^d)$ be a solution to the following elliptic equation
\begin{equation*}
\lambda u_{\lambda} -\sL u_{\lambda} = b.
\end{equation*}
Then by Lemma \ref{lem_pde_01}, $u_{\lambda}$ satisfies
\begin{equation}\label{est_norm_u}
\|u_{\lambda}\|_{2 + \alpha/2}
\leq
C(\lambda)\|b\|_{\alpha},
\end{equation}
with $\lim_{\lambda \to \infty} C(\lambda) = 0$.
By It\^o's formula, we have for each $i=1,\ldots,d$,
\begin{align*}
\int_0^t
b_i(X_r)
\rd r 
&=
u_{\lambda, i}(x_{0})
-
u_{\lambda, i}(X_t)
+
\lambda
\int_0^t
u_{\lambda, i}(X_r)
\rd r
+
\int_0^t
\langle (\nabla u_{\lambda, i})(X_r),\sigma(X_r)\rd B_r\rangle,
\\
\int_0^t
b_i(X_r^n)
\rd r
&=
u_{\lambda, i}(x_{0})
-
u_{\lambda, i}(X_t^n)
+
\lambda
\int_0^t
u_{\lambda, i}(X_r^n)
\rd r
+
\int_0^t
\langle b(X_{\kappa_n(r)}^n) - b(X_r^n),(\nabla u_{\lambda, i})(X_r^n) \rangle
\rd r\\
&\quad
+
\frac{1}{2}
\int_0^t
\mathrm{Tr}
\big(
\big\{a(X_{\kappa_n(r)}^n) - a(X_r^n)\big\}
(\nabla^2 u_{\lambda, i})(X_r^n)
\big)
\rd r
\\&\quad+
\int_0^t
\langle (\nabla u_{\lambda, i})(X_r^n),\sigma(X_{\kappa_n(r)}^n)\rd B_{r} \rangle.
\end{align*}
Hence we have
\begin{align*}
I_{1}^{n}(t)
&\lesssim
\bE[|u_{\lambda}(X_t) - u_{\lambda}(X_t^n)|^p]
+
\lambda^p
\int_0^t
\bE[|u_{\lambda}(X_r) - u_{\lambda}(X_r^n)|^{p}]
\rd r\\
&\qquad
+
\sum_{i = 1}^{d}
\bE\Big[\Big|
\int_0^t
\Big\{
(\nabla u_{\lambda, i})(X_r)\sigma(X_r) - (\nabla u_{\lambda, i})(X_r^n)\sigma(X_{\kappa_n(r)}^n)
\Big\}
\rd B_r
\Big|^p\Big]\\
&\qquad
+
\sum_{i = 1}^d
\bE\Big[\Big|
\int_0^t
\langle b(X_r^n) - b(X_{\kappa_n(r)}^n),\nabla u_{\lambda, i}(X_r^n)\rangle
\rd r
\Big|^p
\Big]\\
&\qquad
+
\sum_{i,k, \ell = 1}^d
\bE\Big[\Big|
\int_0^t
\Big\{a_{k\ell}(X_{\kappa_n(r)}^n) - a_{k\ell}(X_r^n)\Big\}
(\partial_{k\ell}^{2}u_{\lambda, i})(X_r^n)
\rd r
\Big|^p
\Big]
\\
&=:
\sum_{j = 1}^{5}
I_{1,j}^{n}(t).
\end{align*}
By using \eqref{est_norm_u}, we have
\begin{equation}\label{I_1_1}
I_{1,1}^{n}(t)
\leq
\|\nabla u_{\lambda}\|_\infty^p
\bE[|X_t - X_t^n|^p]
\leq
C(\lambda)^p
\|b\|_{\alpha}^{p}
\bE[|X_t - X_t^n|^p].
\end{equation}
By the global Lipschitz continuity of $u_{\lambda}$,
\begin{equation}\label{I_1_2}
I_{1,2}^{n}(t)
\leq
\lambda^p
\|\nabla u_{\lambda}\|_\infty^p
\int_0^t
\bE[|X_r - X_r^n|^p]
\rd r.
\end{equation}
By Burkholder--Davis--Gundy inequality and Lemma \ref{lem_dif_em},
\begin{align}
I_{1,3}^{n}(t)
&\lesssim
\sum_{i = 1}^d
\Big\{
\bE\Big[\Big|
\int_0^t
\Big\{
(\nabla u_{\lambda, i})(X_r)\sigma(X_r) - (\nabla u_{\lambda, i})(X_r^n)\sigma(X_r^n)
\Big\}
\rd B_r\Big|^p\Big]
\notag
\\&
\hspace{2cm}+
\bE\Big[\Big|
\int_0^t
\Big\{
(\nabla u_{\lambda, i})(X_r^n)(\sigma(X_r^n) - \sigma(X_{\kappa_n(r)}^n)
\Big\}
\rd B_r
\Big|^p\Big]
\Big\}
\notag
\\&\lesssim
\|\nabla u_{\lambda}\|_{C^1_b}^p
\|\sigma\|_{C^1_b}^p
\int_0^t
\bE[|X_r - X_r^n|^p]\rd r
+
\|\nabla u_{\lambda}\|_\infty^p
\|\nabla \sigma\|_\infty^p
\int_0^t
\bE[|X_r^n - X_{\kappa_n(r)}^n|^p]
\rd r
\notag
\\&\lesssim
\|\nabla u_{\lambda}\|_{C^1_b}^p
\|\sigma\|_{C^1_b}^p
\int_0^t
\bE[|X_r - X_r^n|^p]\rd r
+
\|\nabla u_{\lambda}\|_\infty^p
\|\nabla \sigma\|_\infty^p
n^{-p/2}.
\label{I_1_3}
\end{align}
In particular, if $\sigma$ is a constant matrix, then since $\nabla \sigma=0$, we have
\begin{equation}\label{I_1_3_additive}
I_{1,3}^{n}(t)
\lesssim
\|\nabla u_{\lambda}\|_{C^1_b}^p
\int_0^t
\bE[|X_r - X_r^n|^p]\rd r.
\end{equation}
Since $\nabla u_{\lambda}$ is a bounded and globally Lipschitz continuous function, by using Proposition \ref{lem_quadratic_em} with $g=\nabla u_{\lambda}$, we have
\begin{equation}\label{I_1_4}
I_{1,4}^{n}(t)
\lesssim
([b]_{\alpha} \vee |b(0)|)(\|\nabla u_{\lambda}\|_\infty + [\nabla u_{\lambda}]_{\mathrm{Lip}})
n^{-\frac{1 + \alpha}{2}p + \varepsilon p}.
\end{equation}
Since $a \in C_b^3(\bR^d)$ and $\nabla^2 u_{\lambda}$ is bounded, by using Lemma \ref{lem_dif_em}, we have
\begin{equation}\label{I_1_5}
I_{1,5}^{n}(t)
\lesssim
\|\nabla a\|_\infty^p
\|\nabla^2 u_{\lambda}\|_\infty^p
\int_0^t
\bE[|X_{\kappa_n(r)}^n - X_r^n|^p]\rd r
\lesssim
\|\nabla a\|_\infty^p
n^{-p/2}.
\end{equation}
In particular, if $\sigma$ is a constant matrix, then since $\nabla a=0$, we have
\begin{equation}\label{I_1_5_additive}
I_{1,5}^{n}(t)
=0.
\end{equation}
By Proposition \ref{lem_quadratic_em} with $g=1$,
\begin{equation}\label{I_2}
I_{2}^{n}(t)
\lesssim
([b]_{\alpha} \vee |b(0)|)
n^{-\frac{1 + \alpha}{2}p + \varepsilon p}.
\end{equation}
By Burkholder--Davis--Gundy's inequality and Lemma \ref{lem_dif_em}, we have
\begin{align}
I_{3}^{n}(t)
+
I_{4}^{n}(t)
&\lesssim
\int_0^t
\bE[|\sigma(X_r) - \sigma(X_r^n)|^p]\rd r
+
\int_0^t
\bE[|\sigma(X_r^n) - \sigma(X_{\kappa_n(r)}^n)|^p]
\rd r
\notag
\\&\lesssim
\|\nabla \sigma\|_\infty^p
\Big\{
\int_0^t
\bE[|X_r - X_r^n|^p]
\rd r
+
\int_0^t
\bE[|X_r^n - X_{\kappa_n(r)}^n|^p]
\rd r
\Big\}
\notag
\\&\lesssim
\|\nabla \sigma\|_\infty^p
\Big\{
\int_0^t\bE[|X_r - X_r^n|^p]\rd r
+
n^{-p/2}
\Big\}.\label{I_3I_4}
\end{align}
In particular, if $\sigma$ is a constant matrix, then we have
\begin{equation}\label{I_345_additive}
I_{3}^{n}(t)=I_{4}^{n}(t)=0.
\end{equation}

If the diffusion coefficient $\sigma$ is not a constant matrix, then by combining \eqref{est_norm_u}, \eqref{I_1_1}, \eqref{I_1_2}, \eqref{I_1_3},
\eqref{I_1_4},  \eqref{I_1_5}, \eqref{I_2} and \eqref{I_3I_4}, there exist $C_{p}>0$ and $C_{p,\lambda}$ such that
\begin{align*}
\bE[|X_t - X_t^n|^p]
\leq
C_{p}
C(\lambda)^p\|b\|_\alpha^p
\bE[|X_t - X_t^n|^p]
+
C_{p,\lambda}
\Big\{
n^{-\frac{1 + \alpha}{2}p+ \varepsilon p}
+
n^{-\frac{p}{2}}
+
\int_0^t \bE[|X_r - X_r^n|^p]\rd r
\Big\}.
\end{align*}
Note that since $\varepsilon < \alpha / 2$, we have $-\frac{1 + \alpha}{2}p + \varepsilon p < -\frac{p}{2}$.
Therefore, by choosing $\lambda>\lambda_{0}$ with
\begin{align*}
C_{p}C(\lambda)^p\norm{b}_\alpha^p < 1
\end{align*}
and then applying Gronwall inequality, we obtain
\begin{align*}
\sup_{t \in [0, T]}
\bE[|X_t - X_t^n|^p]
\lesssim
n^{-p/2}.
\end{align*}

If the diffusion coefficient $\sigma$ is a constant matrix, then by \eqref{I_1_3_additive}, \eqref{I_1_5_additive} and \eqref{I_345_additive},
\begin{align*}
\bE[|X_t - X_t^n|^p]
\leq
C_{p}
C(\lambda)^p\|b\|_\alpha^p
\bE[|X_t - X_t^n|^p]
+
C_{p,\lambda}
\Big\{
n^{-\frac{1 + \alpha}{2}p+ \varepsilon p}
+
\int_0^t \bE[|X_r - X_r^n|^p]\rd r
\Big\}.
\end{align*}
Therefore, by the same argument as above, we obtain
\begin{align*}
\sup_{t \in [0, T]}
\bE[|X_t - X_t^n|^p]
\lesssim
n^{-\frac{1 + \alpha}{2}p+ \varepsilon p}.
\end{align*}
This concludes the proof.
\end{proof}

\subsection*{Acknowledgements}
The second author was supported by JSPS KAKENHI Grant Number 21H00988 and 23K12988.

\end{document}